\documentclass[a4paper]{amsart}
\usepackage[utf8]{inputenc}
\usepackage[T1]{fontenc}
\usepackage{lmodern}
\usepackage{amssymb,amsxtra}
\usepackage[all]{xy}
\usepackage{nicefrac,mathtools,enumitem}
\usepackage{mdwlist}
\setlist[enumerate,1]{label=\textup{(\arabic*)}}
\usepackage{lmodern,textcomp}

\usepackage{tikz}
\usetikzlibrary{matrix}
\tikzset{cd/.style=matrix of math nodes,row sep=2em,column sep=2em, text height=1.5ex, text depth=0.5ex}
\tikzset{cdar/.style=->,auto}
\tikzset{overar/.style={draw=white,double=black,double distance=.4pt,very thick}}

\usetikzlibrary{positioning,calc}
\usetikzlibrary{shapes}
\usetikzlibrary{matrix}
\usetikzlibrary{arrows}

\usepackage{microtype}
\usepackage[pdftitle={On the path correspondences of quantum graphs},
  pdfauthor={Sourav Khatua and Sutanu Roy},
  pdfsubject={Mathematics; MSC }
]{hyperref}

\usepackage[lite]{amsrefs}

\renewcommand{\PrintDOI}[1]{\href{http://dx.doi.org/\detokenize{#1}}{doi: \detokenize{#1}}}

\BibSpec{book}{%
  +{}  {\PrintPrimary}                {transition}
  +{,} { \textit}                     {title}
  +{.} { }                            {part}
  +{:} { \textit}                     {subtitle}
  +{,} { \PrintEdition}               {edition}
  +{}  { \PrintEditorsB}              {editor}
  +{,} { \PrintTranslatorsC}          {translator}
  +{,} { \PrintContributions}         {contribution}
  +{,} { }                            {series}
  +{,} { \voltext}                    {volume}
  +{,} { }                            {publisher}
  +{,} { }                            {organization}
  +{,} { }                            {address}
  +{,} { \PrintDateB}                 {date}
  +{,} { }                            {status}
  +{}  { \parenthesize}               {language}
  +{}  { \PrintTranslation}           {translation}
  +{;} { \PrintReprint}               {reprint}
  +{.} { }                            {note}
  +{.} {}                             {transition}
  +{} { \PrintDOI}                   {doi}
  +{} { available at \url}            {eprint}
  +{}  {\SentenceSpace \PrintReviews} {review}
}

\numberwithin{equation}{section}

\theoremstyle{plain}
\newtheorem{theorem}[equation]{Theorem}

\newtheorem{proposition}[equation]{Proposition}

\theoremstyle{definition}
\newtheorem{definition}[equation]{Definition}

\theoremstyle{remark}

\newtheorem{example}[equation]{Example}

\usepackage{dsfont}

\newcommand*{\nb}{\nobreakdash}
\newcommand*{\Star}{$^*$\nb-}

\newcommand*{\C}{\mathbb C}

\newcommand*{\R}{\mathbb R}
\newcommand*{\N}{\mathbb N}

\newcommand*{\QSp}[1][]{(B_{#1},\psi_{#1})}
\newcommand*{\Qgh}{\mathcal{G}}
\newcommand*{\QghTr}[1][]{(B_{#1},\psi_{#1},A_{#1})}
\newcommand*{\QghFull}[1][]{\Qgh_{#1}=(B_{#1},\psi_{#1},A_{#1})}
\newcommand*{\Eind}[1][]{\epsilon_{#1}}
\newcommand*{\Ecor}[1][]{\mathcal{E}_{#1}}

\newcommand*{\Cont}{\textup C}

\newcommand*{\Id}{\textup{id}}

\newcommand*{\flip}{\chi}

\newcommand*{\Cst}{\textup C^*}

\newcommand*{\Hils}[1][H]{\mathcal{#1}}

\newcommand*{\defeq}{\mathrel{\vcentcolon=}}

\allowdisplaybreaks

\begin{document}
\title{On the path correspondences of quantum graphs}

\author{Sourav Khatua}
\email{24d0796@iitb.ac.in}
\address{Department of Mathematics\\
 Indian Institute of Technology Bombay\\
 Powai, Mumbai, 400076, Maharashtra\\
 India}

\author{Sutanu Roy}
\email{sutanu@iitb.ac.in}
\address{Department of Mathematics\\
 Indian Institute of Technology Bombay\\
 Powai, Mumbai, 400076, Maharashtra\\
 India}

\begin{abstract}
We introduce notions of path indicators and path correspondences for finite quantum graphs, study their basic properties, and compute them explicitly for classical graphs, trivial graphs, and complete quantum graphs.
\end{abstract}

\subjclass[2010]{46L89,81R15}
\keywords{quantum graph, path indicator, path correspondence, subproduct system}
\thanks{The second author was partially supported by Seed Grant (No. RD/0525-IRCCSH0-022) from the IRCC at IIT Bombay. He also thanks Professor Debashish Goswami for stimulating discussions.}

\maketitle

\section{Introduction}
 \label{sec:intro}
 A finite quantum graph is a noncommutative generalisation of a finite graph in which
the vertex set is replaced by a finite-dimensional \(\Cst\)\nb-algebra \(B\)
and the edge set is encoded by one of the two equivalent formulations: a quantum adjacency operator~\cite{MRV18}, or an operator bimodule over
\(B'\)~\cites{Wea12,D2024a}, each generalising the adjacency matrix of a
classical graph. Quantum graphs arise as confusability graphs of quantum channels~\cite{DSW2013a} and underlie the study of the graph isomorphism~\cites{BCEHPSW2020a,MRV2019a}. The subject has developed rapidly in recent years, with progress on classification results in low dimensions~\cites{Gromada2021a,Matsuda2022}, operator algebras associated with quantum graphs and their properties~\cites{BEVW2022a,BHINW2023a,HIN26}, connectivity~\cite{CGW25}, and quantum Cayley graphs~\cite{W2024a}.
 
 \medskip
 
Paths are among the most informative structures carried by a graph.
In particular, the concatenation of edges into paths also provides the fundamental data for
building the graph \(\Cst\)\nb-algebras. It is therefore natural, in passing to
quantum graphs, to ask how edges concatenate into quantum paths and what
structure the resulting path spaces carry, a question raised in the
introduction of~\cite{BHINW2023a}. The aim of this paper is to develop this
theory for paths of finite length, in the spirit of~\cite{BHINW2023a}, by
introducing and studying path correspondences for finite quantum graphs.

\medskip

We begin by fixing notation and recalling the necessary definitions and
results in Section~\ref{sec:prelim}. We shall work with the quantum adjacency
operator formulation of quantum graphs~\cite{MRV18}. The main results are presented in
Section~\ref{sec:path-corr}. For a given finite quantum graph~\(\Qgh\), we
define the sequence~\(\{\Eind[\Qgh]^{k}\}_{k=0}^{\infty}\) of path indicators,
construct the sequence~\(\{\Ecor[\Qgh]^{k}\}_{k=0}^{\infty}\) of path
correspondences over the vertex algebra~\(B\) of~\(\Qgh\), and study some of
their basic properties. In particular, we show that the concatenation maps
\(U_{r,s}\colon \Ecor[\Qgh]^{r}\otimes_{Z(B)}\Ecor[\Qgh]^{s} \to
\Ecor[\Qgh]^{r+s}\) are \(Z(B)\)\nb-balanced and turn \(\{\Ecor[\Qgh]^{k}\}_{k=0}^{\infty}\) into a subproduct system~\cite{ShS09} over~\(Z(B)\). Finally, in Section~\ref{sec:examples}, we compute the edge indicators and edge correspondences for classical graphs, complete quantum graphs, and trivial quantum graphs.

\section{Preliminaries}
 \label{sec:prelim}
\subsection{Hilbert modules and $\Cst$-correspondences} A \(\Cst\)\nb-correspondence over a \(\Cst\)\nb-algebra \(B\) is a right Hilbert \(B\)\nb-module \(\Hils\) along with a \Star{}homomorphism \(\pi_{\Hils}\colon B\to\mathcal{L}(\Hils)\), where \(\mathcal{L}(\Hils)\) denotes the right \(B\)\nb-linear adjointable operators on \(\Hils\). This gives a left \(B\)\nb-action on \(\Hils\), that commutes with the right \(B\)\nb-action on \(\Hils\), by multiplication: \(b\cdot \xi:=\pi_{\Hils}(b)\xi\) for all \(b\in B\) and \(\xi\in \Hils\). Furthermore, \(\Hils\) is said to be \emph{faithful} if \(\pi_{\Hils}\) is faithful, \emph{nondegenerate} or \emph{essential}, if \(B\cdot \Hils\) is dense in \(\Hils\).

Let \(\Hils_{1}\) and~\(\Hils_{2}\) be \(\Cst\)\nb-correspondences over~\(B\). Suppose \(\pi_{\Hils_{i}}\colon B\to\Hils[L](\Hils_{i})\) are \Star{}homomorphisms inducing the left \(B\)\nb-action on~\(\Hils_{i}\), for \(i=1,2\). The algebraic tensor product \(\Hils_{1}\odot\Hils_{2}\) carries a natural \(B\)\nb-valued semi-inner product defined by \(\langle x_{1}\otimes y_{1}, x_{2}\otimes y_{2}\rangle_{B}:=\langle y_{1},\pi_{\Hils_{2}}(\langle x_{1},x_{2}\rangle_{B})y_{2}\rangle_{B}\), for all  \(x_{1},x_{2}\in \Hils_{1}\) and \(y_{1},y_{2}\in\Hils_{2}\). Then we obtain a right Hilbert \(B\)\nb-module~\(\Hils_{1}\otimes_{B}\Hils_{2}\) from \(\Hils_{1}\odot\Hils_{2}\) by separation and completion. The kernel of the quotient map~\(\Hils_{1}\odot\Hils_{2}\to\Hils_{1}\otimes_{B}\Hils_{2}\) is the linear span of \(\{xb\otimes y-x\otimes\pi_{\Hils_{2}}(b)y\mid x\in\Hils_{1}, y\in\Hils_{2}, b\in B\}\). Moreover, there is a left \(B\)\nb-action, commuting with the right \(B\)\nb-action, given by \(b\cdot (x\otimes y):=\pi_{\Hils_{1}}(b)x\otimes y\), for all~\(x_{1}\in\Hils_{1}\), \(y\in\Hils_{2}\) and \(b\in B\). Thus, the interior tensor product~\(\Hils_{1}\otimes_{B}\Hils_{2}\) canonically becomes a \(\Cst\)\nb-correspondence over~\(B\). 

\begin{example}
 \label{ex:ext-tens-fuse}
Let \(B\) be a finite dimensional \(\Cst\)\nb-algebra and let \(\psi\colon B\to\C\) be a faithful state. Then \(B\) acts on \(B\otimes B\) from the right by \((x\otimes y)\cdot b=x\otimes yb\). We equip~\(B\otimes B\) with a \(B\)\nb-valued inner product \(\langle \xi_{1}, \xi_{2}\rangle_{B}:=(\psi\otimes\Id_{B})(\xi_{1}^{*}\xi_{2})\), for all~\(\xi_{1},\xi_{2}\in B\otimes B\). Faithfulness of~\(\psi\) and finite dimensionality of \(B\) make the separation and completion redundant; hence \(B \otimes B\) is a right Hilbert \(B\)\nb-module.  Furthermore, \(B\otimes B\) is a \(\Cst\)\nb-correspondence over \(B\) with respect to the left action of~\(B\) on the first tensor factor of \(B\otimes B\), that is, \(b\cdot (x_{1}\otimes x_{2}):=ax_{1}\otimes x_{2}\), for all~\(b,x_{1},x_{2}\in B\). 

More generally, for nonnegative integer \(n\), we may view~\(B^{\otimes (n+1)}\) as \(\Cst\)\nb-correpondence over~\(B\). We define a \(B\)\nb-valued inner product on \(B^{\otimes (n+1)}\) by \(\langle \xi_{1}, \xi_{2}\rangle_{B}:=(\psi^{\otimes n}\otimes\Id_{B})(\xi_{1}^{*}\xi_{2})\) for all \(\xi_{1},\xi_{2}\in B^{\otimes (n+1)}\). The left and right actions of \(B\) on \(B^{\otimes (n+1)}\) are given by \(b_{1}\cdot (x_{1}\otimes\cdots \otimes x_{n+1})\cdot b_{2}:=b_{1}x_{1}\otimes \cdots \otimes x_{n+1}b_{2}\) for all \(b_{1},b_{2},x_{1},\cdots , x_{n+1}\in B\). In particular, \(B^{\otimes (n+1)}\) is faithful and essential \(B\)\nb-Hilbert module.
\end{example}

\subsection{Finite quantum spaces} Let \(B\) be a  dimensional \(\Cst\)\nb-algebra~\(B\) and let \(\psi\colon B\to\C\) be a faithful state. Then we obtain a Hilbert space \(L^{2}(B)\defeq L^{2}(B,\psi)\) by equipping \(B\) with the inner product \(\langle x,y\rangle=\psi(x^{*}y)\). Faithfulness of \(\psi\) ensures \(L^{2}(B)=B\). The multiplication map~\(m\colon B\otimes B\to B\) is viewed as a linear operator \(L^{2}(B)\otimes L^{2}(B)\to L^{2}(B)\), and \(m^{*}\colon L^{2}(B)\to L^{2}(B)\otimes L^{2}(B)\) is the Hilbert space adjoint~\(m\). For a given \(\delta\in\R\), a state~\(\psi\colon B\to\C\) is said to be a \emph{\(\delta\)\nb-form} if \(mm^{*}=\delta^{2}\Id_{B}\).

This gives us the standard prototype for \emph{finite quantum spaces} as pairs \(\QSp\), consisting of a finite dimensional \(\Cst\)\nb-algebra \(B\) and a faithful \(\delta\)\nb-form \(\psi\colon B\to\C\). 

Since \(B\) is finite dimensional, there exists \(N\in\N\) and \(n_{1},\cdots ,n_{N}\in \N\) such that \(B=\oplus_{a=1}^{N}M_{n_{a}}(\C)\).  Let \(\{e_{kl}^{a}\mid  k,l=1,\cdots ,n_{a}, a=1,\cdots , N\}\) be a set of standard matrix units. Without of loss of generality, we assume that the density matrix with respect to the usual trace \(Q=\oplus_{a=1}^{N}Q^{(a)}\) is diagonal, with the \(k\)\nb-th entry of \(Q^{(a)}\) being \(\psi(e_{kk}^{a})\). Furthermore, \(\psi\) is a \(\delta\)\nb-form if and only if \(Q^{(a)}\) is invertible and \(\sum_{k=1}^{n_{a}}\psi(e_{kk}^{a})^{-1}=\delta^{2}\) for all~\(a=1,\cdots ,N\). The analytic extension of the modular automorphism group for \(\psi\) is defined by \(\sigma_{z}^{\psi}(b)\defeq Q^{iz}b Q^{-iz}\) for all~\(b\in B\), \(z\in\C\). Recall the adapted matrix units~\(\{f_{kl}^{a}\mid k,l=1,\cdots ,n_{a}, a=1,\cdots , N\}\) for the quantum space \(\QSp\) from~\cite{BHINW2023a}. They are defined by 
\begin{equation}
 \label{eq:adapted-mat-unit}
 f_{kl}^{a}:=\frac{1}{\sqrt{\psi(e_{kk}^{a})\psi(e_{ll}^{a})}}e_{kl}^{a},
 \qquad
 \text{\(1\leq k,l\leq n_{a}, 1\leq a\leq N\).}
\end{equation}
Furthermore, they satisfy the following relations:
\begin{align}
 \label{eq:adapted-mat-rel}
 m^{*}(f_{kl}^{a}):=\sum_{r=1}^{n_{a}}f^{a}_{kr}\otimes f^{a}_{rl}, 
 &\qquad
 f^{a}_{kl}f^{b}_{rs}:= \delta_{a,b}\delta_{l,r}\frac{1}{\psi(e^{a}_{ll})} f_{ks}^{a}.
\end{align}
We also note that \(1_{B}=\sum_{a=1}^{N}\sum_{k=1}^{n_{a}}\psi(e_{kk}^{a})f_{kk}^{a}\) and 
\(\sigma_{-i}^{\psi}(f_{kl}^{a})=\frac{\psi(e_{kk}^{a})}{\psi(e_{ll}^{a})}f_{kl}^{a}\) for all~\(k,l=1,\cdots , n_{a}\), \(N=1,\cdots ,N\).
 
 \section{Quantum path indicators and path correspondences}
  \label{sec:path-corr}
  Let \(\QSp\) be a finite quantum space with a \(\delta\)\nb-form \(\psi\). A linear operator \(A\colon L^{2}(B)\to L^{2}(B)\) is said to be a \emph{quantum adjacency matrix} if 
 \begin{equation}
  \label{eq:q-adj-matrix}
  m(A\otimes A)m^{*}=\delta^{2}A .
 \end{equation}
 A \emph{finite directed quantum graph} \(\Qgh\) is a triple \(\QghTr\) consisting of a finite quantum space \(\QSp\) and a \emph{quantum adjacency matrix} \(A\colon L^{2}(B)\to L^{2}(B)\).
 
Suppose \(\QghFull\) is a finite directed quantum graph. Define~\(T\colon B\otimes B\to B\) by 
  \begin{equation}
   \label{eq:def-T}
    T\defeq m\circ\flip\circ(\sigma^{\psi}_{-i}\otimes\Id_{B}),
    \qquad \text{where \(\flip\) is the tensor flip.}
  \end{equation}   
Then \(T^*=(\sigma_{-i}^{\psi}\otimes\Id_{B})\circ\chi\circ m^*\) is the Hilbert space adjoint of the linear operator \(T\colon L^{2}(B)\otimes L^{2}(B)\to L^{2}(B)\).  
  
  \begin{definition}
  \label{def:k-path-new}
  Suppose~\(k\) is a positive integer. The \emph{k\nb-length quantum path indicator} \(\Eind[\Qgh]^{k}
  \in B^{\otimes (k+1)}\) is defined recursively as follows: 
  \begin{equation}
   \label{eq:k-path-ind}
   \Eind[\Qgh]^{0}=1_{B},
   \quad
   \text{and}
   \quad
   \Eind[\Qgh]^{k}=\frac{1}{\delta^{2}}(\Id_{B^{\otimes (k-1)}} \otimes (\Id_{B}\otimes A)T^{*}) \Eind[\Qgh]^{k-1} 
   \in B^{\otimes (k+1)} 
   \quad
   \text{for \(k\geq 1\)}.
  \end{equation}

  The \emph{k\nb-length quantum path space} is defined by 
   \begin{equation}
    \label{eq:k-path-corr}
    \Ecor[\Qgh]^{k}
    \defeq \text{span}\{(x\otimes 1_{B})\Eind[\Qgh]^{k} (1_{B}\otimes y)\mid x, y\in B^{\otimes k}\}
    \subseteq B^{\otimes (k+1)}.
   \end{equation}
\end{definition}
We recall the edge indicator~\(\Eind[\Qgh]\) of~\(\mathcal{G}\) is defined by \(\Eind[\Qgh]\defeq \frac{1}{\delta^{2}}(\Id_{B}\otimes A)m^{*}(1_{B})\in B\otimes B\) and the edge correspondence \(\Ecor[\Qgh]\defeq B \cdot \Eind[\Qgh] \cdot B\subseteq B\otimes B\) from \cite{BHINW2023a}. 
  In particular, we get \(\Eind[\Qgh]^{1}=\Eind[\Qgh]\). Indeed,  
  \begin{align*} 
     \frac{1}{\delta^{2}}(\Id_{B}\otimes A)T^*(1_{B})
    &=\frac{1}{\delta^{2}}(\sigma_{-i}^{\psi} \otimes A)\flip\left(\sum_{a=1}^{N}\sum_{k=1}^{n_{a}}\psi(e_{kk}^{a}) m^*(f_{kk}^{a})\right)\\
    &=\frac{1}{\delta^{2}}(\Id_{B} \otimes A) \left(\sum_{a=1}^{N}\sum_{k,l=1}^{n_{a}}\psi(e_{kk}^{a}) \sigma_{-i}^{\psi}(f_{lk}^{a})\otimes f_{kl}^{a} \right)\\
    &=\frac{1}{\delta^{2}}(\Id_{B} \otimes A) \left(\sum_{a=1}^{N}\sum_{l,k=1}^{n_{a}} \psi(e_{ll}^{a}) f_{lk}^{a} \otimes f_{kl}^{a} \right)\\
    &=\frac{1}{\delta^{2}}(\Id_{B} \otimes A)m^{*}(1_{B}).
  \end{align*}

 Subsequently, \(\Ecor[\Qgh]^{1}=\Ecor[\Qgh]\) is the edge correspondence, which is a \(\Cst\)\nb-subcorrespondence of \(B\otimes B\)  over~\(B\). Furthermore, \(\Ecor[\Qgh]^{k}\) is a \(\Cst\)\nb-subcorrespondence of~\(B^{\otimes (k+1)}\) over~\(B\) for every positive integer \(k\).

\subsection{Properties of path indicators and correspondences}
In this section, we record basic properties of quantum path indicators and quantum path spaces. 
  
   \begin{proposition}
  \label{prop:target}
  Suppose \(k\) is a positive integer. Then \(A^{k}(x)=\delta^{2^k}(\psi^{\otimes k}\otimes\Id_{B})(x\cdot \Eind[\Qgh]^{k})\) for all \(x\in B\).
 \end{proposition}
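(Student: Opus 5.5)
The plan is induction on \(k\), peeling off one application of \(A\) at each step via the recursion \eqref{eq:k-path-ind}. Write \(S\defeq(\Id_{B}\otimes A)T^{*}\colon B\to B\otimes B\), so that \(\Eind[\Qgh]^{k}=\delta^{-2}(\Id_{B^{\otimes(k-1)}}\otimes S)\Eind[\Qgh]^{k-1}\). Here \(x\cdot\Eind[\Qgh]^{k}\) is read as \((x\otimes 1_{B}^{\otimes k})\Eind[\Qgh]^{k}\), i.e.\ \(x\) multiplies the first leg. Since \(S\) acts only on the last leg of \(\Eind[\Qgh]^{k-1}\), left multiplication by \(x\) on the first leg commutes with \(\Id^{\otimes(k-1)}\otimes S\) when \(k\geq 2\). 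The functional \(\psi^{\otimes(k-1)}\) applied to the first \(k-1\) legs also commutes with it. For \(k=1\) we have \(\Id^{\otimes 0}\otimes S=S\) itself, so this commutation is not available and the case is handled separately below. Hence, with \(y_{k-1}\defeq(\psi^{\otimes(k-1)}\otimes\Id_{B})(x\cdot\Eind[\Qgh]^{k-1})\) for \(k\geq 2\), we get
\[
(\psi^{\otimes k}\otimes\Id_{B})(x\cdot\Eind[\Qgh]^{k})=\delta^{-2}(\psi\otimes\Id_{B})S(y_{k-1})=\delta^{-2}A\bigl((\psi\otimes\Id_{B})T^{*}(y_{k-1})\bigr).
\]

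The key lemma is that \((\psi\otimes\Id_{B})T^{*}(y)=y\) for all \(y\in B\). To see it, first use \(\psi\circ\sigma^{\psi}_{-i}=\psi\) to drop the modular automorphism. The flip \(\flip\) then turns the expression into \((\Id_{B}\otimes\psi)m^{*}(y)\). On adapted matrix units, \eqref{eq:adapted-mat-rel} gives \((\Id\otimes\psi)m^{*}(f^{a}_{kl})=\sum_{r}f^{a}_{kr}\psi(f^{a}_{rl})=f^{a}_{kl}\,\psi(e^{a}_{ll})/\psi(e^{a}_{ll})=f^{a}_{kl}\). Linearity then finishes the lemma. With it, the display collapses to \(\delta^{-2}A(y_{k-1})\).

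For the base case \(k=1\), use \(\Eind[\Qgh]^{1}=\Eind[\Qgh]\). Write \(1_{B}=\sum_{a,k}\psi(e^{a}_{kk})f^{a}_{kk}\) and \(m^{*}(1_{B})=\sum\psi(e^{a}_{ll})f^{a}_{lk}\otimes f^{a}_{kl}\) as in the computation preceding this subsection. A direct matrix-unit calculation should then give \((\psi\otimes\Id)((x\otimes1)m^{*}(1_{B}))=x\). Combined with \(\Eind[\Qgh]=\delta^{-2}(\Id_{B}\otimes A)m^{*}(1_{B})\), this yields \((\psi\otimes\Id)(x\cdot\Eind[\Qgh])=\delta^{-2}A(x)\).

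Induction then gives \((\psi^{\otimes k}\otimes\Id_{B})(x\cdot\Eind[\Qgh]^{k})=\delta^{-2k}A^{k}(x)\), since each step contributes exactly one factor \(\delta^{-2}\). The main obstacle I expect is bookkeeping the constant. The recursion produces \(\delta^{2k}\), which agrees with \(\delta^{2^{k}}\) for \(k=1,2\) but not beyond. So I would double-check whether the exponent in the statement is meant to be \(2k\), or whether some additional normalisation is intended. The other delicate point is the base-case identity \((\psi\otimes\Id)((x\otimes1)m^{*}(1_{B}))=x\), where the modular weights must cancel correctly. I would verify it on \(x=f^{a}_{pq}\) using both relations in \eqref{eq:adapted-mat-rel}.
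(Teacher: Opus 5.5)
Your argument is correct. It is the paper's induction done with slice maps instead of inner products. The paper pairs \((\psi^{\otimes(n+1)}\otimes\Id_{B})(x\cdot\Eind[\Qgh]^{n+1})\) with \(y\in B\), moves \(\Id_{B^{\otimes n}}\otimes(\Id_{B}\otimes A)T^{*}\) across the \(\psi^{\otimes(n+2)}\)-inner product, and then uses \(T(1_{B}\otimes w)=w\). That identity is exactly the adjoint of your lemma \((\psi\otimes\Id_{B})T^{*}=\Id_{B}\). Working with slice maps directly avoids the inner-product bookkeeping and makes the scalar visible at each step.

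The base-case calculation you defer takes one line. First, \(m^{*}\) is a \(B\)-bimodule map: check \((f^{b}_{pq}\otimes 1_{B})m^{*}(f^{a}_{kl})=m^{*}(f^{b}_{pq}f^{a}_{kl})\) using \eqref{eq:adapted-mat-rel}. Hence \((x\otimes 1_{B})m^{*}(1_{B})=m^{*}(x)\). Second, \((\psi\otimes\Id_{B})m^{*}=\Id_{B}\) because \(\psi(f^{a}_{kr})=\delta_{k,r}\). The paper simply quotes this case from the literature.

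Your worry about the constant is justified, and the error is in the paper, not in your argument. Each application of \eqref{eq:k-path-ind} contributes exactly one factor \(\delta^{-2}\). So the true identity is \(A^{k}(x)=\delta^{2k}(\psi^{\otimes k}\otimes\Id_{B})(x\cdot\Eind[\Qgh]^{k})\).

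In the paper's chain of equalities, the second step replaces \(\delta^{2^{n+1}}\cdot\delta^{-2}\) by \(\delta^{2^{n}}\). That is valid only for \(n=1\), which is why the printed formula survives only for \(k=1,2\).

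The complete quantum graph of Section~\ref{sec:examples} is an explicit counterexample to the printed exponent:
\begin{itemize}
\item there \(\Eind[\Qgh]^{k}=1_{B}^{\otimes(k+1)}\) and \(A^{k}(1_{B})=\delta^{2k}1_{B}\);
\item the printed statement would instead give \(\delta^{2^{k}}1_{B}\);
\item these differ for \(k\geq 3\), since \(\delta>1\) whenever \(\dim B>1\).
\end{itemize}

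So your proof establishes the correct version of the proposition. You should state it with \(\delta^{2k}\) rather than hedge.
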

 \begin{proof}
  The statement for~\(k=1\) is proved in~\cite{BHINW2023a}*{Proposition 2.3 (1)}. Suppose the statement holds true for \(k=n\). For 
  \(x,y\in B\), we have 
  \begin{align*}
   & \langle \delta^{2^{n+1}}(\psi^{\otimes (n+1)}\otimes 1_{B})(x\cdot\Eind[\Qgh]^{n+1}),y\rangle_{\psi}\\
   &=\delta^{2^{n+1}} \langle \Eind[\Qgh]^{n+1}, x^{*}\otimes 1_{B}^{\otimes n} \otimes y\rangle_{\psi^{\otimes (n+2)}} \\
   &= \delta^{2^{n}}\langle (\Id_{B^{\otimes n}} \otimes (\Id_{B}\otimes A)T^{*}) \Eind[\Qgh]^{n}, x^{*}\otimes 1_{B}^{\otimes n} \otimes y\rangle_{\psi^{\otimes (n+2)}} \\
    &= \delta^{2^{n}}\langle \Eind[\Qgh]^{n}, x^{*}\otimes 1_{B}^{\otimes (n-1)} \otimes T(1_{B}\otimes A^{*}(y))\rangle_{\psi^{\otimes (n+1)}}\\
    &= \delta^{2^{n}}\langle \Eind[\Qgh]^{n}, x^{*}\otimes 1_{B}^{\otimes (n-1)} \otimes A^{*}(y))\rangle_{\psi^{\otimes (n+1)}}\\
    &=\langle \delta^{2^{n}} (\psi^{\otimes n}\otimes 1_{B})(x\cdot \Eind[\Qgh]^{n}), A^{*}(y) \rangle_{\psi}
   =\langle A^{n}(x),A^*(y)\rangle_{\psi}
   =\langle A^{n+1}(x),y\rangle_{\psi}.\qedhere
  \end{align*}
 \end{proof}

  For any pair of positive integers~\((r,s)\), define 
  \(\iota_{r,s}\colon B^{\otimes (r+1)}\otimes B^{\otimes (s+1)}\to B^{\otimes (r+s+1)}\) by 
  \begin{equation}
   \label{eq:def-iota}
   \iota_{r,s}(x\otimes y)\defeq (\Id_{B^{\otimes r}}\otimes T \otimes \Id_{B^{\otimes s}})(x\otimes y),
   \quad\text{for all \(x\in B^{\otimes (r+1)}\), \(y\in B^{\otimes (s+1)}\).}
  \end{equation}
In the following, we find an alternative description of quantum path indicators. 

\begin{proposition}
 \label{prop:equiv-Eind}
 Suppose \(k\) is a non\nb-negative integer. Then \(\Eind[\Qgh]^{k+1}=\iota_{1,k}(\Eind[\Qgh]\otimes\Eind[\Qgh]^{k})\).
\end{proposition}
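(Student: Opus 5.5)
The plan is induction on \(k\). The recursion \eqref{eq:k-path-ind} acts only on the \emph{last} tensor leg, while \(\iota_{1,k}\) acts on the second leg of \(\Eind[\Qgh]\) and the \emph{first} leg of \(\Eind[\Qgh]^{k}\). For \(k\geq 2\) these are disjoint legs, so the two operations commute. The real content therefore sits in the cases \(k=1\) (and \(k=0\)), which I would reduce to a single Frobenius-type identity.

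\textbf{Key lemma.} For all \(y\in B\),
\[
(T\otimes\Id_{B})(y\otimes\Eind[\Qgh])=\tfrac{1}{\delta^{2}}(\Id_{B}\otimes A)T^{*}(y).
\]
Since \(A\) acts only on the second leg of \(\Eind[\Qgh]=\frac{1}{\delta^{2}}(\Id_{B}\otimes A)m^{*}(1_{B})\), it suffices to prove \((T\otimes\Id_{B})(y\otimes m^{*}(1_{B}))=T^{*}(y)\).

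For \(y=1_{B}\) this holds because \(T(1_{B}\otimes b)=b\) and, by the computation preceding the proposition, \(T^{*}(1_{B})=m^{*}(1_{B})\). In general I would verify it on adapted matrix units, using \eqref{eq:adapted-mat-rel} and the formula for \(\sigma^{\psi}_{-i}(f^{a}_{kl})\):
\begin{itemize*}
\item Write \(m^{*}(1_{B})=\sum\psi(e^{a}_{ll})f^{a}_{lk}\otimes f^{a}_{kl}\).
\item Compute \(T(f^{b}_{rs}\otimes f^{a}_{lk})=f^{a}_{lk}\sigma^{\psi}_{-i}(f^{b}_{rs})\).
\item Compare with \(T^{*}(f^{b}_{rs})=(\sigma^{\psi}_{-i}\otimes\Id_{B})\flip\, m^{*}(f^{b}_{rs})\).
\end{itemize*}
Alternatively, one can argue abstractly: \(m^{*}\) is a \(B\)-bimodule map, so \(m^{*}(1_{B})\) behaves as a "cup" that turns the right leg of \(T\) into \(T^{*}\). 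The bookkeeping of the \(\sigma^{\psi}_{-i}\) weights is where errors are likely, so I would check carefully that the powers \(\psi(e^{a}_{kk})^{\pm1}\) cancel.

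\textbf{Case \(k=1\).} We have \(\iota_{1,1}(\Eind[\Qgh]\otimes\Eind[\Qgh])=(\Id_{B}\otimes T\otimes\Id_{B})(\Eind[\Qgh]\otimes\Eind[\Qgh])\). Applying the lemma in the last three legs, with \(y\) running over the second leg of \(\Eind[\Qgh]\), gives \((\Id_{B}\otimes\frac{1}{\delta^{2}}(\Id_{B}\otimes A)T^{*})\Eind[\Qgh]\). This is \(\Eind[\Qgh]^{2}\) by definition.

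\textbf{Inductive step, \(k\geq2\).} Assume \(\Eind[\Qgh]^{k}=\iota_{1,k-1}(\Eind[\Qgh]\otimes\Eind[\Qgh]^{k-1})\). Then
\[
\Eind[\Qgh]^{k+1}=\tfrac{1}{\delta^{2}}\bigl(\Id_{B^{\otimes k}}\otimes(\Id_{B}\otimes A)T^{*}\bigr)(\Id_{B}\otimes T\otimes\Id_{B^{\otimes(k-1)}})(\Eind[\Qgh]\otimes\Eind[\Qgh]^{k-1}).
\]
Because \(k-1\geq1\), the map \(T\) touches legs \(2,3\) and the recursion operator touches the last leg, so the two operators commute. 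Moving the recursion operator inside turns \(\Eind[\Qgh]^{k-1}\) into \(\Eind[\Qgh]^{k}\), which yields \(\iota_{1,k}(\Eind[\Qgh]\otimes\Eind[\Qgh]^{k})\).

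\textbf{Case \(k=0\) (main obstacle).} Here \(\iota_{1,0}(\Eind[\Qgh]\otimes1_{B})=(\Id_{B}\otimes\sigma^{\psi}_{-i})\Eind[\Qgh]\), since \(T(b\otimes 1_{B})=\sigma^{\psi}_{-i}(b)\). So the claim amounts to \((\Id_{B}\otimes\sigma^{\psi}_{-i})\Eind[\Qgh]=\Eind[\Qgh]\).

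I would try to derive this from \eqref{eq:q-adj-matrix} by showing that \(A\) intertwines the modular group, as is proved in \cite{BHINW2023a} for quantum adjacency matrices over \(\delta\)-forms. Combined with \((\Id_{B}\otimes\sigma^{\psi}_{-i})m^{*}(1_{B})=(\sigma^{\psi}_{i}\otimes\Id_{B})m^{*}(1_{B})\), one would then compare with the first leg of \(\Eind[\Qgh]\).

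If this invariance fails in general, I would start the induction at \(k=1\) and treat \(k=0\) as a convention. This is the step I expect to be the real difficulty.
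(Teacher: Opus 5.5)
For \(k\geq 1\) your proof is correct and is essentially the paper's. The paper also verifies \(k=1\) by direct computation, then inducts using that \(T\) and the recursion operator act on disjoint legs once \(k\geq 2\). The only difference is packaging. After expanding the second leg of the first \(\Eind[\Qgh]\) in the matrix coefficients of \(A\), the paper's coordinate computation is exactly your identity \((T\otimes\Id_{B})(y\otimes m^{*}(1_{B}))=T^{*}(y)\), evaluated at \(y=f^{b}_{rs}\) and resummed. That identity does check out on adapted matrix units, and isolating it as a lemma makes the \(\sigma^{\psi}_{-i}\) bookkeeping transparent.

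On \(k=0\) your suspicion is justified, but the route you sketch cannot succeed. If \eqref{eq:def-iota} is extended literally to \(s=0\), then \(\iota_{1,0}(\Eind[\Qgh]\otimes 1_{B})=(\Id_{B}\otimes\sigma^{\psi}_{-i})\Eind[\Qgh]\). Since \(\Eind[\Qgh]=\frac{1}{\delta^{2}}\sum\psi(e^{a}_{kk})f^{a}_{kl}\otimes A(f^{a}_{lk})\) and the \(f^{a}_{kl}\) form a basis, this equals \(\Eind[\Qgh]\) if and only if \(\sigma^{\psi}_{-i}\circ A=A\).

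This fails even when \(A\) commutes with the modular group. Take the trivial graph \(A=\Id_{B}\) on \(B=M_{2}(\C)\) with \(\psi(e_{11})=q\neq\frac12\). The coefficient of \(f_{12}\otimes f_{21}\) is \(q/\delta^{2}\) in \(\Eind[\Qgh]\) but \((1-q)/\delta^{2}\) in \((\Id_{B}\otimes\sigma^{\psi}_{-i})\Eind[\Qgh]\).

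So the \(k=0\) case holds only under the convention that \(\iota_{1,0}\) is the canonical identification \(x\otimes b\mapsto x(1_{B}\otimes b)\). The paper defines \(\iota_{r,s}\) only for positive \(r,s\), and its remark that this case is trivial tacitly relies on that reading. Your fallback (start the induction at \(k=1\) and treat \(k=0\) as a convention) is the correct resolution. Nothing later in the paper uses the \(k=0\) case.
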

\begin{proof}
  The statement is trivial for \(k=0\).  For the remaining cases, we proceed by induction. Let the matrix coefficients \(\{A^{rsb}_{kla}\mid k,l=1\cdots , n_{a}, r,s=1,\cdots ,n_{b}, a,b=1,\cdots ,N\}\) of \(A\) with respect to the adapted matrix units \(\{f_{kl}^{a}\mid kl,l=1,\cdots , n_{a}, a=1,\cdots , N\}\) be given by 
  \[
     A(f_{kl}^{a})\defeq\sum_{b=1}^{N}\sum_{r,s=1}^{n_{b}}A_{kla}^{rsb}f_{rs}^{b}.
  \]
  Consequently, 
  \begin{align*}
    \Eind[\Qgh]
    =\frac{1}{\delta^{2}}(\Id_{B}\otimes A)m^{*}(1_{B})
    &=\frac{1}{\delta^{2}}\sum_{a=1}^{N}\sum_{k,l=1}^{n_{a}}\psi(e^{a}_{kk})f_{kl}^{a}\otimes A(f_{lk}^{a})\\
    &=\frac{1}{\delta^{2}}\sum_{a,b=1}^{N}\sum_{k,l=1}^{n_{a}}\sum_{r,s=1}^{n_{b}}\psi(e^{a}_{kk})A_{lka}^{rsb} f_{kl}^{a}\otimes f_{rs}^{b}.
  \end{align*}
  Using it, we compute
  \begin{align*}
   &\iota_{1,1}(\Eind[\Qgh]\otimes\Eind[\Qgh])\\
   &=\frac{1}{\delta^{4}} \iota_{1,1} 
   \left(
   \sum_{a,b=1}^{N}\sum_{k,l=1}^{n_{a}}\sum_{r,s=1}^{n_{b}}\psi(e^{a}_{kk})A_{lka}^{rsb} f_{kl}^{a}\otimes f_{rs}^{b} 
   \otimes 
   \sum_{c=1}^{N}\sum_{x,y=1}^{n_{c}} \psi(e_{xx}^{c})f_{xy}^{c}\otimes A(f_{yx}^c)
   \right)\\
   &=\frac{1}{\delta^{4}} \sum_{a,b,c=1}^{N}\sum_{k,l=1}^{n_{a}}\sum_{r,s=1}^{n_{b}}\sum_{x,y=1}^{n_{c}} 
    \psi(e^{a}_{kk})\psi(e_{xx}^{c})A_{lka}^{rsb} f_{kl}^{a}\otimes f_{xy}^{c}\sigma_{-i}^{\psi}(f_{rs}^{b}) \otimes A(f_{yx}^c)\\
   &=\frac{1}{\delta^{4}} \sum_{a,b,c=1}^{N}\sum_{k,l=1}^{n_{a}}\sum_{r,s=1}^{n_{b}}\sum_{x,y=1}^{n_{c}} 
     \frac{\psi(e^{a}_{kk})\psi(e_{xx}^{c})\psi(e_{rr}^{b})}{\psi(e_{ss}^{b})} A_{lka}^{rsb} f_{kl}^{a}\otimes f_{xy}^{c}f_{rs}^{b} \otimes A(f_{yx}^c)\\
   &=\frac{1}{\delta^{4}} \sum_{a,b=1}^{N}\sum_{k,l=1}^{n_{a}}\sum_{r,s,x=1}^{n_{b}} 
     \frac{\psi(e^{a}_{kk})\psi(e_{xx}^{b})}{\psi(e_{ss}^{b})} A_{lka}^{rsb} f_{kl}^{a}\otimes f_{xs}^{b} \otimes A(f_{rx}^b)\\
   &=\frac{1}{\delta^{4}} \sum_{a,b=1}^{N}\sum_{k,l=1}^{n_{a}}\sum_{r,s,x=1}^{n_{b}} \psi(e_{kk}^{a}) A^{rsb}_{lka} f_{kl}^{a}\otimes \sigma_{-i}^{\psi}(f_{xs}^{b}) \otimes A(f_{rx}^b)\\
    &=\frac{1}{\delta^{4}} \sum_{a,b=1}^{N}\sum_{k,l=1}^{n_{a}}\sum_{r,s=1}^{n_{b}} \psi(e_{kk}^{a}) A^{rsb}_{lka} f_{kl}^{a}\otimes (\Id_{B}\otimes A)T^*(f_{rs}^{b})\\
    &=\frac{1}{\delta^{4}} \sum_{a=1}^{N}\sum_{k,l=1}^{n_{a}} \psi(e_{kk}^{a}) f_{kl}^{a}\otimes (\Id_{B}\otimes A)T^{*}(A(f_{lk}^{a}))\\
    &=\frac{1}{\delta^{4}}  
    \left(\Id_{B}\otimes(\Id_{B}\otimes A)T^{*}\right)\left((\Id_{B}\otimes A)m^{*}\left(\sum_{a=1}^{N}\sum_{k=1}^{n_{a}} \psi(e_{kk}^{a})f_{kk}^{a}\right)\right)\\
    &=\frac{1}{\delta^{2}} (\Id_{B}\otimes (\Id_{B}\otimes A)T^{*})\Eind[\Qgh]
    =\Eind[\Qgh]^{2}.
  \end{align*}
  Hence, the statement holds true for \(k=1\). Suppose the statement holds true for \(k=n\). We compute
  \begin{align*}
   & \iota_{1,n+1}(\Eind[\Qgh]\otimes\Eind[\Qgh]^{n+1})\\ 
   &= (\Id_{B}\otimes T\otimes \Id_{B^{\otimes (n+1)}}) 
        \left(\Eind[\Qgh]\otimes \frac{1}{\delta^{2}}(\Id_{B^{\otimes n}}\otimes(\Id_{B}\otimes A)T^{*})\Eind[\Qgh]^{n}\right)\\
   &=\frac{1}{\delta^{2}}(\Id_{B^{\otimes (n+1)}}\otimes (\Id_{B}\otimes A)T^{*})
        \left (\Id_{B}\otimes T\otimes \Id_{B^{\otimes n}})(\Eind[\Qgh]\otimes\Eind[\Qgh]^{n})\right)\\
   &=\frac{1}{\delta^{2}}(\Id_{B^{\otimes (n+1)}}\otimes (\Id_{B}\otimes A)T^{*})\Eind[\Qgh]^{n+1}=\Eind[\Qgh]^{n+2};       
  \end{align*}
  the first and the fourth equalities use~\eqref{eq:k-path-ind}, the second equality is trivial, and the third equality uses \eqref{eq:def-iota} and the the induction hypothesis. 
\end{proof}
The above description of quantum path indicators becomes particularly useful because it provides a convenient framework for defining the composition of quantum path indicators and the associated quantum path correspondences. We conclude this section with a proof of the following result concerning the composition of quantum path indicators and quantum path correspondences.
 \begin{theorem}
  \label{the:prop-p-ind-p-mod}
  Suppose \(r,s\) are positive integers. Then
  \begin{enumerate}
   \item\label{eq:p-ind-asso} \(\Eind[\Qgh]^{r+s}
   =\iota_{r,s} (\Eind[\Qgh]^{r}\otimes \Eind[\Qgh]^{s})\);
   \item\label{eq:p-corr-asso} \(\Ecor[\Qgh]^{r+s}=\iota_{r,s}(\Ecor[\Qgh]^{r}\otimes\Ecor[\Qgh]^{s})\).
   \end{enumerate} 
 \end{theorem}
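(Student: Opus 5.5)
We prove part~(1) by induction on \(r\), keeping \(s\) arbitrary. For \(r=1\), part~(1) is exactly Proposition~\ref{prop:equiv-Eind} with \(k=s\).

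For the inductive step, assume (1) holds for \(r\) and all \(s\). Proposition~\ref{prop:equiv-Eind} gives \(\Eind[\Qgh]^{r+1}=\iota_{1,r}(\Eind[\Qgh]\otimes\Eind[\Qgh]^{r})\). Hence
\[
\iota_{r+1,s}(\Eind[\Qgh]^{r+1}\otimes\Eind[\Qgh]^{s})
=\iota_{r+1,s}\bigl(\iota_{1,r}\otimes\Id_{B^{\otimes(s+1)}}\bigr)(\Eind[\Qgh]\otimes\Eind[\Qgh]^{r}\otimes\Eind[\Qgh]^{s}).
\]
The two maps apply \(T\) in disjoint tensor positions. \(\iota_{1,r}\) contracts positions \(2,3\) of \(B^{\otimes 2}\otimes B^{\otimes(r+1)}\otimes B^{\otimes(s+1)}\). \(\iota_{r+1,s}\) then contracts the last leg of the \(B^{\otimes(r+2)}\)-block with the first leg of the \(B^{\otimes(s+1)}\)-block. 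These are positions \(r+3,r+4\) of the original tensor, which lie inside \(\Eind[\Qgh]^{r}\otimes\Eind[\Qgh]^{s}\). So the composite equals \(\iota_{1,r+s}\bigl(\Id_{B^{\otimes 2}}\otimes\iota_{r,s}\bigr)\) applied to the same element. This commutation of \(T\)'s is the only bookkeeping to check: it is the identity
\[
(\Id_{B}\otimes T\otimes\Id)(\Id\otimes T\otimes\Id)=(\Id\otimes T\otimes \Id)(\Id_{B}\otimes T\otimes\Id)
\]
with the indices matched appropriately. By the induction hypothesis, \(\iota_{r,s}(\Eind[\Qgh]^{r}\otimes\Eind[\Qgh]^{s})=\Eind[\Qgh]^{r+s}\). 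Proposition~\ref{prop:equiv-Eind} then gives \(\iota_{1,r+s}(\Eind[\Qgh]\otimes\Eind[\Qgh]^{r+s})=\Eind[\Qgh]^{r+s+1}\), which is (1) for \(r+1\).

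For part~(2), we first record how \(\iota_{r,s}\) interacts with the outer multiplications. For \(x\in B^{\otimes r}\) and \(y'\in B^{\otimes s}\) (acting on the first \(r\) and last \(s\) legs), \(\iota_{r,s}\) is a bimodule map:
\[
\iota_{r,s}\bigl((x\otimes1_B)\xi\otimes\eta(1_B\otimes y')\bigr)=(x\otimes 1_B^{\otimes (s+1)})\,\iota_{r,s}(\xi\otimes\eta)\,(1_B^{\otimes(r+1)}\otimes y'),
\]
since \(T\) acts only on the middle two legs. The key middle-leg identity concerns the inner factors. For \(y\in B^{\otimes r}\) and \(x'\in B^{\otimes s}\), write \(y=y_0\otimes b\) and \(x'=c\otimes x_0\), with \(b,c\in B\) the legs touching \(T\). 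We need \(T(ub\otimes cv)\) in terms of \(T(u\otimes v)\). Since \(T(u\otimes v)=v\,\sigma_{-i}^{\psi}(u)\), we get
\[
T(ub\otimes cv)=cv\,\sigma_{-i}^{\psi}(u)\sigma_{-i}^{\psi}(b).
\]
Hence \(\iota_{r,s}\) of a general spanning element of \(\Ecor[\Qgh]^{r}\otimes\Ecor[\Qgh]^{s}\) equals
\[
\bigl(x\otimes c\otimes x_0\bigr)\,\Eind[\Qgh]^{r+s}\,\bigl(y_0\otimes\sigma_{-i}^{\psi}(b)\otimes y'\bigr),
\]
after a suitable rearrangement of legs: left multiplication in leg \(r+1\) by \(c\), and right multiplication in leg \(r+1\) by \(\sigma_{-i}^{\psi}(b)\). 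This uses part~(1), and the fact that left and right multiplications in the middle leg commute with \(\Id\otimes T\otimes\Id\) in this twisted way. Every such element lies in \(\Ecor[\Qgh]^{r+s}\), because multiplying \(\Eind[\Qgh]^{r+s}\) on the left by an element of \(B^{\otimes(r+s)}\otimes1_B\) and on the right by one of \(1_B\otimes B^{\otimes(r+s)}\) covers leg \(r+1\) from both sides. This gives the inclusion \(\subseteq\).

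For \(\supseteq\), take an arbitrary generator \((X\otimes1_B)\Eind[\Qgh]^{r+s}(1_B\otimes Y)\) with \(X,Y\in B^{\otimes(r+s)}\). By linearity we may assume these are elementary tensors. Split \(X=x\otimes c\otimes x_0\) and \(Y=y_0\otimes d\otimes y'\), with \(c,d\) sitting in leg \(r+1\). Since \(\sigma_{-i}^{\psi}\) is bijective, choose \(b=\sigma_{-i}^{\psi}{}^{-1}(d)\). The identity above then realizes the generator as \(\iota_{r,s}\) of \(\bigl((x\otimes1)\Eind[\Qgh]^{r}(1\otimes y_0\otimes b)\bigr)\otimes\bigl((c\otimes x_0\otimes1)\Eind[\Qgh]^{s}(1\otimes y')\bigr)\).

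The main obstacle is the middle-leg computation, which requires care. \(T\) is multiplicative only up to the flip and the modular twist, and we must verify that left/right multiplications on the contracted legs pass through \(T\) as claimed. That is, we must check
\[
T\bigl((1\otimes c)\zeta(b\otimes 1)\bigr)=c\,T(\zeta)\,\sigma_{-i}^{\psi}(b)
\]
for the relevant placement. If the placement of \(c,b\) comes out differently, we adjust by \(\sigma\)-twists, which is harmless since \(\sigma_{-i}^{\psi}\) is an automorphism.
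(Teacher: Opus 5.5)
Your proof is correct. It rests on the same two ingredients as the paper's: for part~(1), Proposition~\ref{prop:equiv-Eind} together with the commutation of \(T\)'s acting on disjoint legs; for part~(2), the twisted middle-leg identity \(T\bigl((1_B\otimes c)\zeta(b\otimes 1_B)\bigr)=c\,T(\zeta)\,\sigma_{-i}^{\psi}(b)\), part~(1), and bijectivity of \(\sigma_{-i}^{\psi}\).

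The organisation differs. The paper first proves the closed stitching formulas \(\Eind[\Qgh]^{r}=(\Id_{B}\otimes T^{\otimes (r-1)}\otimes\Id_{B})(\Eind[\Qgh]^{\otimes r})\) and \(\Ecor[\Qgh]^{r}=(\Id_{B}\otimes T^{\otimes (r-1)}\otimes\Id_{B})(\Ecor[\Qgh]^{\otimes r})\) by induction. The inductive step for the second formula is exactly your middle-leg argument in the special case \(s=1\). The paper then gets both statements by splitting \(T^{\otimes(r+s-1)}\) as \(T^{\otimes(r-1)}\otimes T\otimes T^{\otimes(s-1)}\).

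You instead induct on \(r\) directly via \(\iota_{r+1,s}\circ(\iota_{1,r}\otimes\Id)=\iota_{1,r+s}\circ(\Id_{B^{\otimes 2}}\otimes\iota_{r,s})\) for~(1). For~(2) you run the middle-leg argument once for arbitrary \((r,s)\) and prove both inclusions explicitly.

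What each route buys:
\begin{itemize}
\item Yours is shorter. It also makes visible where \(r,s\ge 1\) is used: leg \(r+1\) is reached by both the left and right multipliers.
\item Yours shows where the reverse inclusion comes from, namely the choice \(b=\sigma_{i}^{\psi}(d)\). The paper leaves this implicit when it writes \(\sigma_{i}^{\psi}(B)\).
\item The paper's route produces the stitching formulas as by-products. These are reused in Proposition~\ref{prop:bimod-iota} to reduce \(\iota_{r,s}\) to \(\iota_{1,1}\).
\end{itemize}

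Two cosmetic remarks. Your displayed element in~(2) should be padded with units, as \((x\otimes c\otimes x_0\otimes 1_B)\,\Eind[\Qgh]^{r+s}\,(1_B\otimes y_0\otimes\sigma_{-i}^{\psi}(b)\otimes y')\). Also, your placement of \(b\) and \(c\) is already right, so the closing hedge about adjusting by \(\sigma\)-twists is unnecessary.
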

 \begin{proof}
 Firstly, we prove by induction, that for every positive integer~\(r\)
   \begin{equation}
    \label{eq:edge-ind-stitch}
      \Eind[\Qgh]^{r}=(\Id_{B}\otimes T^{\otimes (r-1)}\otimes\Id_{B})(\Eind[\Qgh]^{\otimes r}) 
      \in B^{\otimes (r+1)}.
   \end{equation}
   For~\(r=1\), the equality is trivial. Assume for \(r=r'\) the equation~\eqref{eq:edge-ind-stitch} holds true. 
Using \eqref{prop:equiv-Eind} together with the induction hypothesis in the following computation, we obtain \eqref{eq:edge-ind-stitch} for 
\(r=r'+1\).
   \begin{align*}
     \Eind[\Qgh]^{r'+1}
        =\iota_{1,r'}(\Eind[\Qgh]\otimes\Eind[\Qgh]^{r'+1})
     &=(\Id_{B}\otimes T\otimes \Id_{B^{\otimes r'}})(\Eind[\Qgh] \otimes \Eind[\Qgh]^{r'})\\
     &=(\Id_{B}\otimes T\otimes \Id_{B^{\otimes r'}})\left(\Eind[\Qgh] \otimes (\Id_{B}\otimes T^{\otimes (r'-1)}\otimes\Id_{B})\Eind[\Qgh]^{r'}\right) \\
     &=(\Id_{B}\otimes T\otimes T^{\otimes (r'-1)}\otimes \Id_{B})(\Eind[\Qgh]\otimes\Eind[\Qgh]^{\otimes r'})\\
     &=(\Id_{B}\otimes T^{\otimes r'}\otimes\Id_{B})(\Eind[\Qgh]^{\otimes (r'+1)}).
   \end{align*}
   Then combining~\eqref{prop:equiv-Eind} and~\eqref{eq:def-iota} below we prove the statement~\ref{eq:p-ind-asso}.
   \begin{align*}
     & \Eind[\Qgh]^{r+s}
      =(\Id_{B}\otimes T^{\otimes (r+s-1)}\otimes \Id_{B})(\Eind[\Qgh]^{\otimes (r+s)})\\
     &=(\Id_{B^{\otimes r}}\otimes T \otimes \Id_{B^{\otimes s}})
     \left((\Id_{B}\otimes T^{\otimes (r-1)}\otimes\Id_{B})(\Eind[\Qgh]^{\otimes r})\otimes (\Id_{B}\otimes T^{\otimes (s-1)}\otimes\Id_{B})(\Eind[\Qgh]^{\otimes s})\right)\\
     &=(\Id_{B^{\otimes r}}\otimes T \otimes \Id_{B^{\otimes s}})(\Eind[\Qgh]^{r}\otimes \Eind[\Qgh]^{s})
     =\iota_{r,s}(\Eind[\Qgh]^{r}\otimes \Eind[\Qgh]^{s}).
   \end{align*}
   Next we prove by induction that for every positive integer~\(r\)
   \begin{equation}
    \label{eq:edge-corr-stitch}
      \Ecor[\Qgh]^{r}=(\Id_{B}\otimes T^{\otimes (r-1)}\otimes\Id_{B})(\Ecor[\Qgh]^{\otimes r}) 
      \subseteq B^{\otimes (r+1)}.
   \end{equation}
      For~\(r=1\), the equality is trivial. Assume for \(r=r'\) we the equation~\eqref{eq:edge-corr-stitch} holds true. Using 
      \eqref{eq:k-path-corr} together with the induction hypothesis in the following computation, we obtain \eqref{eq:edge-corr-stitch} 
      for \(r=r'+1\).
      \begin{align*}
       & \Ecor[\Qgh]^{r'+1}\\
       &=(B^{\otimes (r'+1)}\otimes 1_{B})\Eind[\Qgh]^{r'+1}(1_{B}\otimes B^{\otimes (r'+1)})\\
       &=(B^{\otimes (r'+1)}\otimes 1_{B})\left((\Id_{B^{\otimes r'}} \otimes T \otimes \Id_{B})(\Eind[\Qgh]^{r'}\otimes \Eind[\Qgh])\right)(1_{B}\otimes B^{\otimes (r'+1)})\\
       &=(\Id_{B^{\otimes r'}} \otimes T \otimes \Id_{B})\\
       & \left((B^{\otimes r'}\otimes 1_{B}\otimes B\otimes 1_{B})(\Eind[\Qgh]^{r'}\otimes \Eind[\Qgh])  
       (1_{B}\otimes B^{\otimes (r'-1)} \otimes\sigma_{i}^{\psi}(B)\otimes 1_{B} \otimes B)\right)\\
       &= (\Id_{B^{\otimes r'}} \otimes T \otimes \Id_{B}) \left((B^{\otimes r'}\otimes 1_{B})\Eind[\Qgh]^{r'}(1_{B}\otimes B^{\otimes r'})
             \otimes (B\otimes 1_{B})\Eind[\Qgh](1_{B}\otimes B) \right)\\
       &=(\Id_{B^{\otimes r'}} \otimes T \otimes \Id_{B})\left(\Ecor[\Qgh]^{r'}\otimes\Ecor[\Qgh]^{1}\right)\\
       &=(\Id_{B^{\otimes r'}} \otimes T \otimes \Id_{B}) \left((\Id_{B}\otimes T^{\otimes (r'-1)}\otimes\Id_{B^{\otimes 3}})
       (\Ecor[\Qgh]^{\otimes r'}\otimes\Ecor[\Qgh]^{1})\right)\\
       &=(\Id_{B} \otimes T^{\otimes r'} \otimes \Id_{B})(\Ecor[\Qgh]^{\otimes (r'+1)}).
      \end{align*}
      Now combining~\eqref{eq:edge-corr-stitch} and \eqref{eq:def-iota} below we prove the statement~\ref{eq:p-corr-asso}.
      \begin{align*}
        & \Ecor[\Qgh]^{r+s}\\
        &=(\Id_{B}\otimes T^{\otimes (r+s-1)}\otimes\Id_{B})(\Ecor[\Qgh]^{\otimes r+s})\\
        &=(\Id_{B^{\otimes r}}\otimes T \otimes \Id_{B^{\otimes s}})
        \left((\Id_{B}\otimes T^{\otimes (r-1)}\otimes\Id_{B})(\Ecor[\Qgh]^{\otimes r})\otimes (\Id_{B}\otimes T^{\otimes (s-1)}\otimes\Id_{B})(\Ecor[\Qgh]^{\otimes s})\right)\\
     &=(\Id_{B^{\otimes r}}\otimes T \otimes \Id_{B^{\otimes s}})(\Ecor[\Qgh]^{r}\otimes \Ecor[\Qgh]^{s})
     =\iota_{r,s}(\Ecor[\Qgh]^{r}\otimes \Ecor[\Qgh]^{s}).\qedhere
      \end{align*}
 \end{proof}
 
 \subsection{Subproduct system of quantum path correspondences}
  \label{sec:subprod-sys}
  Suppose \(\mathcal{G}=(B,\psi,A)\) is a finite directed quantum graph. Suppose \(p_{1},\cdots ,p_{N}\) are the minimal central projections of \(B\). The center \(Z(B)\) of \(B\) is the \(\Cst\)\nb-subalgebra of \(B\) generated by \(\{p_{a}\mid a=1,\cdots ,N\}\). Let \(E\colon B\to Z(B)\) be the unique \(\psi\)\nb-preserving conditional expectation. On the set of the adapted matrix units~\(\{f_{ij}^{a}\mid \text{\(i,j=1,\cdots , n_{a}\), \(a=1,\cdots , N\)}\}\) it is defined by \(E(f_{ij}^{a}):=\delta_{i,j}\frac{1}{\psi(p_{a})}p_{a}\).
  
The left action of \(Z(B)\) for the embedding \(Z(B)\hookrightarrow B\) on \(B^{\otimes (k+1)}\) is faithful, as \(Z(B)B=B\). Furthermore, the faithful conditional expectation~\(E\) induces a \(Z(B)\)\nb-valued inner product on~\(B^{\otimes (k+1)}\): \(\langle \xi_{1},\xi_{2}\rangle_{Z(B)}:=E(\langle \xi_{1},\xi_{2}\rangle_{B})\) for all~\(\xi_{1},\xi_{2}\in B^{\otimes (k+1)}\). Finaly, the right action of~\(Z(B)\) on \(B^{\otimes (k+1)}\) induced by the embedding \(Z(B)\hookrightarrow B\) makes \(B^{\otimes (k+1)}\) a \(\Cst\)\nb-correspondence over~\(Z(B)\). Subsequently, \(\Ecor[\Qgh]^{k}\) becomes a \(\Cst\)\nb-subcorrespondence over~\(Z(B)\) for every positive integer \(k\).

\begin{proposition}
 \label{prop:bimod-iota}
 For all positive integers~\(r,s\), the maps~\(\iota_{r,s}\) defined by~\eqref{eq:def-iota} descents to adjointable \(Z(B)\)\nb-bimodule maps~\(\iota_{r,s}\colon \Ecor[\Qgh]^{r}\otimes_{Z(B)} \Ecor[\Qgh]^{s}\to \Ecor[\Qgh]^{r+s}\). 
\end{proposition}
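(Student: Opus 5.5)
Here is how I would prove the proposition. The first step is to check that \(\iota_{r,s}\), as a linear map on \(\Ecor[\Qgh]^{r}\odot\Ecor[\Qgh]^{s}\), respects the bimodule structures and kills the balancing relations. Left and right \(Z(B)\)-linearity are immediate. The left action of \(z\in Z(B)\) multiplies the first tensor leg, and \(\iota_{r,s}\) acts as the identity there. The right action multiplies the last leg, which \(\iota_{r,s}\) also leaves untouched.

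For balancedness I need \(\iota_{r,s}(\xi z\otimes\eta)=\iota_{r,s}(\xi\otimes z\eta)\) for \(z\in Z(B)\). Writing \(\xi=\xi'\otimes u\) and \(\eta=v\otimes\eta'\), this reduces to the identity \(T(uz\otimes v)=T(u\otimes zv)\). This holds because \(\sigma_{-i}^{\psi}\) fixes \(Z(B)\) pointwise (\(Q\) commutes with central elements). So \(T(uz\otimes v)=v\sigma_{-i}^{\psi}(u)z\), while \(T(u\otimes zv)=zv\sigma_{-i}^{\psi}(u)\), and these agree since \(z\) is central. By Theorem~\ref{the:prop-p-ind-p-mod}\ref{eq:p-corr-asso}, the image lands in \(\Ecor[\Qgh]^{r+s}\). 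Hence \(\iota_{r,s}\) factors through the algebraic balanced tensor product.

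The second step is to show that this linear map is well defined and bounded on the Hilbert module \(\Ecor[\Qgh]^{r}\otimes_{Z(B)}\Ecor[\Qgh]^{s}\). The issue is that the separation quotient might a priori be larger than the span of the balancing relations. Everything is finite dimensional, so the interior tensor product is the quotient of the algebraic tensor product by the null vectors of the \(Z(B)\)-valued semi-inner product. I would show that these null vectors are exactly the span of \(\{\xi z\otimes\eta-\xi\otimes z\eta\}\), as recalled in the preliminaries. Then \(\iota_{r,s}\) descends to a linear map between finite-dimensional spaces. Boundedness is then automatic.

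The third step is adjointability. This is the main obstacle, since a bounded module map between Hilbert modules need not be adjointable in general. Here the setting saves us. Both modules are finite dimensional over the finite-dimensional commutative algebra \(Z(B)\cong\C^{N}\), so every Hilbert \(Z(B)\)-module decomposes as \(\bigoplus_{a}\mathcal{H}p_{a}\), a direct sum of Hilbert spaces. A right \(Z(B)\)-linear map preserves this decomposition, so it is a direct sum of linear maps between finite-dimensional Hilbert spaces \(\mathcal{H}_{a}\to\mathcal{K}_{a}\). Each of these has a Hilbert space adjoint, and the sum of these adjoints is an adjoint for \(\iota_{r,s}\).

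I would therefore argue abstractly that any right \(Z(B)\)-linear map between finite-dimensional Hilbert \(Z(B)\)-modules is adjointable. I would not try to write down \(\iota_{r,s}^{*}\) explicitly. If an explicit formula is desired, one could try \((\Id_{B^{\otimes r}}\otimes T^{*}\otimes\Id_{B^{\otimes s}})\) composed with the relevant projections, but I would avoid it. Checking that such a formula is compatible with the conditional expectation \(E\) is where the computation would get messy.
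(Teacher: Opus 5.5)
Your proof is correct, and it differs from the paper's in two places.

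\textbf{Balancedness.} The paper uses the stitching identity \eqref{eq:edge-corr-stitch} to reduce to \(\iota_{1,1}\). It then checks the relation on generators \((x_{1}\otimes 1_{B})\Eind[\Qgh](1_{B}\otimes y_{1}d)\otimes(x_{2}\otimes 1_{B})\Eind[\Qgh](1_{B}\otimes y_{2})\). You instead check the leg-wise identity \(T(uz\otimes v)=T(u\otimes zv)\). This handles all \(r,s\) at once, on the whole ambient space \(B^{\otimes(r+1)}\odot B^{\otimes(s+1)}\). You also make explicit, via the null-space description of the interior tensor product, why killing the balancing relations is enough for the map to descend.

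\textbf{Adjointability.} The paper again reduces to \(\iota_{1,1}\) and verifies, by a matrix-unit computation, the explicit formula
\[
\iota_{r,s}^{*}(x)=\sum_{a=1}^{N}\psi(p_{a})\,(\Id_{B^{\otimes r}}\otimes T^{*}\otimes\Id_{B^{\otimes s}})\bigl((1_{B^{\otimes r}}\otimes p_{a}\otimes 1_{B^{\otimes s}})x\bigr).
\]
You get adjointability for free. Over \(Z(B)\cong\C^{N}\), every finite-dimensional Hilbert module splits orthogonally into the Hilbert spaces \(\mathcal{H}p_{a}\). Every right \(Z(B)\)-linear map is then a direct sum of maps between Hilbert spaces, each of which has an adjoint.

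\textbf{What each route buys.} Your route is shorter and more robust. It avoids the paper's reduction to \(\iota_{1,1}\), which the paper justifies only by comparing images, and it avoids the matrix-unit bookkeeping. What it does not give is the formula for \(\iota_{r,s}^{*}\), and the paper needs that formula immediately afterwards. The coisometry property \(U_{r,s}U_{r,s}^{*}=\Id\) in Theorem~\ref{the:subprod-sys-qgraph} is proved by substituting it and using \(mm^{*}=\delta^{2}\Id_{B}\). So your argument fully proves the proposition as stated. However, the explicit adjoint would still have to be established separately for the next theorem. It is essentially the guess you mention at the end, weighted by \(\psi(p_{a})\) and cut down by \(p_{a}\) on the junction leg.
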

\begin{proof}
 Clearly, \(\iota_{r,s}\) is a \(Z(B)\)\nb-bimodule map. Next, we show that \(\iota_{r,s}\) respects the \(Z(B)\)\nb-balancing relations. Now, 
 using~\eqref{eq:def-iota}, \eqref{eq:edge-corr-stitch} and Theorem~\ref{the:prop-p-ind-p-mod}~\ref{eq:p-corr-asso}, we observe that 
 \begin{align*}
    \iota_{r,s}(\Ecor[\Qgh]^{r}\otimes\Ecor[\Qgh]^{s})
   &=(\Id_{B}\otimes T^{\otimes (r+s-1)}\otimes \Id_{B})\left(\Ecor[\Qgh]^{\otimes (r+s)}\right)\\
   &=(\Id_{B}\otimes T^{\otimes (r-1)}\otimes\Id_{B}\otimes T^{\otimes (s-1)}\otimes\Id_{B})\\
   & \left(\Ecor[\Qgh]^{\otimes (r-1)} \otimes (\Id_{B}\otimes T\otimes\Id_{B})(\Ecor[\Qgh]^{1}\otimes\Ecor[\Qgh]^{1}) \otimes \Ecor[\Qgh]^{\otimes (s-1)}\right)\\
   &=(\Id_{B}\otimes T^{\otimes (r-1)}\otimes\Id_{B}\otimes T^{\otimes (s-1)}\otimes\Id_{B})\\
   & \left(\Ecor[\Qgh]^{\otimes (r-1)} \otimes \iota_{1,1}(\Ecor[\Qgh]^{1}\otimes\Ecor[\Qgh]^{1}) \otimes \Ecor[\Qgh]^{\otimes (s-1)}\right) 
\end{align*}
Therefore, it is sufficient to verify that \(\iota_{1,1}\) respects the \(Z(B)\)\nb-balancing relations. Note that \(\sigma_{-i}^{\psi}(p_{a})=p_{a}\) for all~\(a=1,\cdots , N\); hence 
\(\sigma_{-i}^{\psi}(d)=d\) for all~\(d\in Z(B)\). For any 
 \(d\in Z(B)\), \(x_{1},x_{2},y_{1},y_{2}\in B\), we have
 \begin{align*}
   & \iota_{1,1}\left((x_{1}\otimes 1_{B})\Eind[\Qgh] (1\otimes y_{1}d) \otimes (x_{2}\otimes 1_{B})\Eind[\Qgh] (1\otimes y_{2})\right)\\
  &= (x_{1}\otimes x_{2}\otimes 1_{B})\left(\iota_{1,1}(\Eind[\Qgh]\otimes\Eind[\Qgh])\right) (1_{B}\otimes \sigma_{-i}^{\psi}(y_{1}d)\otimes y_{2})\\ 
  &=(x_{1}\otimes  x_{2}\otimes 1_{B})\left(\iota_{1,1}(\Eind[\Qgh]\otimes\Eind[\Qgh])\right) (1_{B}\otimes \sigma^{\psi}_{-i}(y_{1})d\otimes y_{2})\\
  &=(x_{1}\otimes  d x_{2}\otimes 1_{B})\left(\iota_{1,1}(\Eind[\Qgh]\otimes\Eind[\Qgh])\right) (1_{B}\otimes \sigma_{-i}^{\psi}(y_{1})\otimes y_{2})\\  
  &=\iota_{1,1}\left((x_{1}\otimes 1_{B})\Eind[\Qgh] (1\otimes y_{1}) \otimes (dx_{2}\otimes 1_{B})\Eind[\Qgh] (1\otimes y_{2})\right).
 \end{align*}
  Therefore, \(\iota_{1,1}\) descents to a well defined linear map~\(\Ecor[\Qgh]^{1}\otimes_{Z(B)}\Ecor[\Qgh]^{1}\to\Ecor[\Qgh]^{2}\). 
 Once again, to show that~\(\iota_{r,s}\) is adjointable, it is sufficient to show that~\(\iota_{1,1}\) is adjointable. In fact, we verify that 
 \(\iota_{1,1}^{*}(x)=\sum_{a=1}^{N}\psi(p_{a})(\Id_{B}\otimes T^{*}\otimes\Id_{B})((1_{B}\otimes p_{a}\otimes 1_{B})x)\) for all~\(x\in 
 \Ecor[\Qgh]^{2}\). Equivalently, we verify the following: 
 \[
\langle \iota_{1,1}(u\otimes f^{b}_{kl}\otimes f^{c}_{rs}\otimes v), x\otimes f_{ij}^{a}\otimes y\rangle_{Z(B)}
=\langle u\otimes f^{b}_{kl}\otimes f^{c}_{rs}\otimes v, \iota_{1,1}^*(x\otimes f_{ij}^{a}\otimes y)\rangle_{Z(B)}
\] 
for all~\(u\otimes f_{kl}^{b}, f_{rs}^{c}\otimes v\in\Ecor[\Qgh]^{1}\) and \(x\otimes f_{ij}^{a}\otimes y \in \Ecor[\Qgh]^{2}\subseteq B^{\otimes 3}\). 

In the following computations we use~\eqref{eq:adapted-mat-unit} and \eqref{eq:adapted-mat-rel}. We simplify the left hand side 
\begin{align*}
 &  \langle \iota_{1,1}(u\otimes f^{b}_{kl}\otimes f^{c}_{rs}\otimes v), x\otimes f_{ij}^{a}\otimes y\rangle_{Z(B)}\\
 &=\langle u\otimes f_{rs}^{c}\sigma_{-i}^{\psi}(f_{kl}^{b})\otimes v, x\otimes f_{ij}^{a}\otimes y\rangle_{Z(B)}\\
 &=\delta_{b,c}\delta_{k,s}\frac{1}{\psi(e^{b}_{ll})}\langle u\otimes f^{b}_{rl} \otimes v, x\otimes f_{ij}^{a}\otimes y\rangle_{Z(B)}\\
 &=\delta_{b,c}\delta_{k,s}\frac{\psi(u^{*}x)\psi(f_{lr}^{b}f_{ij}^{a})}{\psi(e^{b}_{ll})}E(v^{*}y)\\
 &=\delta_{a,b} \delta_{b,c}\delta_{k,s}\delta_{i,r}\delta_{j,l} \frac{\psi(u^{*}x)}{\psi(e^{a}_{ii})\psi(e^{a}_{ll})}E(v^{*}y).
\end{align*}
 Similarly, we simplify the right hand side
\begin{align*}
 &\langle u\otimes f^{b}_{kl}\otimes f^{c}_{rs}\otimes v, T^*(x\otimes f_{ij}^{a}\otimes y)\rangle_{Z(B)}\\
 &=\sum_{t=1}^{n_{a}}\psi(p_{a})\langle u\otimes f^{b}_{kl}\otimes f^{c}_{rs}\otimes v, x\otimes \sigma_{-i}^{\psi}(f_{tj}^{a})\otimes f_{it}^{a}\otimes y)\rangle_{Z(B)}\\
 &=\sum_{t=1}^{n_{a}}\psi(p_{a})\left\langle  f^{c}_{rs}\otimes v,
 \left\langle u\otimes f_{kl}^{b},x\otimes \frac{\psi(e_{tt}^{a})}{\psi(e^{a}_{jj})}f^{a}_{tj}\right\rangle_{Z(B)}\cdot (f_{it}^{a}\otimes y)\right\rangle_{Z(B)}\\
 &=\sum_{t=1}^{n_{a}}\psi(p_{a}) \left\langle  f^{c}_{rs}\otimes v, 
 \frac{\psi(u^{*}x)\psi(e_{tt}^{a})}{\psi(e_{jj}^{a})}
 E(f_{lk}^{b}f_{tj}^{a})f_{it}^{a}\otimes y\right\rangle_{Z(B)}\\
 &=\delta_{a,b} \frac{\psi(p_{a})\psi(u^{*}x)}{\psi(e_{jj}^{a})}\left\langle  f^{c}_{rs}\otimes v, E(f_{lj}^{a})f_{ik}^{a}\otimes y\right\rangle_{Z(B)}\\
 &=\delta_{a,b} \delta_{j,l} \frac{\psi(u^{*}x)}{\psi(e_{jj}^{a})}
 \left\langle  f^{c}_{rs}\otimes v, p_{a}f_{ik}^{a}\otimes y\right\rangle_{Z(B)}\\
 &=\delta_{a,b} \delta_{j,l} \frac{\psi(u^{*}x)}{\psi(e_{jj}^{a})} \psi(f_{sr}^{c}f_{ik}^{a})E(v^{*}y)\\
 &=\delta_{a,b} \delta_{a,c} \delta_{i,r} \delta_{j,l} \frac{\psi(u^{*}x)\psi(f_{sk}^{a})}{\psi(e_{ll}^{a})\psi(e_{ii}^{a})}E(v^{*}y)\\
 &=\delta_{a,b} \delta_{a,c} \delta_{k,s} \delta_{i,r} \delta_{j,l} \frac{\psi(u^{*}x)}{\psi(e_{ll}^{a})\psi(e_{ii}^{a})}E(v^{*}y).
\end{align*} 
Thus, for all positive integers~\(r,s\) and for all~\(x\in \Ecor[\Qgh]^{r+s}\subseteq B^{\otimes (r+s+1)}\), we have 
\[
     \iota_{r,s}^{*}(x) =\sum_{a=1}^{N}  \psi(p_{a})(\Id_{B^{\otimes r}}\otimes T^{*}\otimes \Id_{B^{\otimes s}})
   (1_{B^{\otimes r}} \otimes p_{a}\otimes 1_{B^{\otimes s}})(x)
  \qedhere
\] 
\end{proof}

\begin{theorem}
 \label{the:subprod-sys-qgraph}
 Suppose~\(\Ecor[\Qgh]^{0}=Z(B)\). For every~\(r\in\N_{0}\), define 
 \(U_{0,r}\colon \Ecor[\Qgh]^{0}\otimes_{Z(B)}\otimes\Ecor[\Qgh]^{r}\to \Ecor[\Qgh]^{r}\) and  \(U_{r,0}\colon\Ecor[\Qgh]^{r}\otimes_{Z(B)} \Ecor[\Qgh]^{0}\to\Ecor[\Qgh]^{r}\) as the left and right actions of~\(Z(B)\) on~\(\Ecor[\Qgh]^{r}\). Let~\(\lambda\defeq\frac{1}{\delta}\sum_{a=1}^{N}\frac{1}{\sqrt{\psi(p_{a})}}p_{a}\in Z(B)\subseteq B\). For all positive integers \(r,s\), define \(U_{r,s}\colon \Ecor[\Qgh]^{r}\otimes_{Z(B)}\Ecor[\Qgh]^{s}\to\Ecor[\Qgh]^{r+s}\) by
 \[
   U_{r,s}(x\otimes y)\defeq \left(1_{B^{\otimes r}}\otimes \lambda \otimes 1_{B^{\otimes s}}\right) \iota_{r,s}
   \left((\Id_{B^{\otimes r}}\otimes (\sigma_{i}^{\psi}\otimes\Id_{B})\otimes\Id_{B^{\otimes s}})(x\otimes y)\right),
 \]
 for all \(x\in \Ecor[\Qgh]^{r}\) and \(y\in \Ecor[\Qgh]^{s}\). Let 
 \(E_{\Qgh}\defeq\{\Ecor[\Qgh]^{r}\}_{r\in\N_{0}}\) and let~\(U\defeq \{U_{r,s}\}_{r,s\in\N_{0}}\). Then \((E_{\Qgh},U)\) is a subproduct system over~\(Z(B)\). 
\end{theorem}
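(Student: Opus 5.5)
Following Shalit–Solel, a subproduct system over \(Z(B)\) requires three things. First, each \(\Ecor[\Qgh]^{r}\) must be a \(\Cst\)\nb-correspondence over \(Z(B)\) with \(\Ecor[\Qgh]^{0}=Z(B)\). Second, each \(U_{r,s}\) must be a well-defined coisometric, adjointable bimodule map, meaning \(U_{r,s}U_{r,s}^{*}=\Id_{\Ecor[\Qgh]^{r+s}}\). Third, the family must be associative, \(U_{r+s,t}(U_{r,s}\otimes\Id)=U_{r,s+t}(\Id\otimes U_{s,t})\), and unital, with \(U_{0,r}\) and \(U_{r,0}\) the actions. I will check these in that order. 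The first point was recorded in Section~\ref{sec:subprod-sys}. For \(r=0\) or \(s=0\), the \(U\)'s are the canonical unitary identifications \(Z(B)\otimes_{Z(B)}\Ecor[\Qgh]^{r}\cong\Ecor[\Qgh]^{r}\), so associativity involving an index \(0\) reduces to \(U_{r,s}\) being a \(Z(B)\)\nb-bimodule map.

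For well-definedness, write \(U_{r,s}=L_{\lambda}\circ\iota_{r,s}\circ S\). Here \(S\) applies \(\sigma_{i}^{\psi}\) in the last leg of \(x\), and \(L_{\lambda}\) multiplies by \(\lambda\) in leg \(r+1\). Three facts combine. Since \(\sigma_{i}^{\psi}\) fixes \(Z(B)\), the map \(S\) preserves the \(Z(B)\)\nb-balancing relations. Proposition~\ref{prop:bimod-iota} handles \(\iota_{r,s}\). Finally, \(\lambda\) is central and commutes with \(T\) in the sense \(T(x\otimes y)\lambda=T(x\otimes y\lambda)\). Together these show \(U_{r,s}\) descends to \(\Ecor[\Qgh]^{r}\otimes_{Z(B)}\Ecor[\Qgh]^{s}\) and is an adjointable bimodule map. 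Surjectivity onto \(\Ecor[\Qgh]^{r+s}\) follows from Theorem~\ref{the:prop-p-ind-p-mod}\ref{eq:p-corr-asso}, because \(S\) maps \(\Ecor[\Qgh]^{r}\) onto itself (right multiplication by \(B\) in the last leg absorbs \(\sigma_{i}^{\psi}\)) and \(\lambda\) is invertible. Note that \(T\circ(\sigma_{i}^{\psi}\otimes\Id)=m\circ\flip\), so concretely \(U_{r,s}(x\otimes y)\) is \(\lambda\) times the multiplication of the last leg of \(y\)'s first factor by the last leg of \(x\). I will use this simpler formula throughout.

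For associativity, both sides of the identity act on \(x\otimes y\otimes z\). Each side multiplies together the boundary legs \(x_{r+1}\) and \(y_{1}\), and likewise \(y_{s+1}\) and \(z_{1}\), inserting \(\lambda\) at each junction. These operations act on disjoint legs of \(B^{\otimes(r+s+t+3)}\), and \(\lambda\) is central, so both sides agree by direct comparison.

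The main obstacle is coisometricity, \(U_{r,s}U_{r,s}^{*}=\Id\) on \(\Ecor[\Qgh]^{r+s}\). Using the adjoint formula from Proposition~\ref{prop:bimod-iota} and the reduction there, it suffices to treat \(r=s=1\) on the middle leg. I must show that \(m\circ(\lambda^{2}\text{-weighted } m^{*}\text{-type map})\) acts as the identity on the middle leg when composed with the \(Z(B)\)\nb-inner product adjoint. Concretely, I will compute \(U_{1,1}^{*}\), which involves \(\sigma_{-i}^{\psi}\) and the weights \(\psi(p_{a})\) coming from \(E\). On adapted matrix units, the composite sends \(f_{ij}^{a}\) to \(\lambda^{2}\psi(p_{a})\sum_{t}\psi(e_{tt}^{a})^{-1}f_{ij}^{a}=\lambda^{2}\psi(p_{a})\delta^{2}f_{ij}^{a}\) by the \(\delta\)\nb-form condition. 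With \(\lambda^{2}p_{a}=\delta^{-2}\psi(p_{a})^{-1}p_{a}\) this equals \(f_{ij}^{a}\). This normalization is exactly why \(\lambda\) was chosen. Tracking the modular factors \(\psi(e_{tt}^{a})/\psi(e_{jj}^{a})\) correctly against the \(\sigma_{i}^{\psi}\) twist is the delicate part. I expect it to reduce to a matrix-unit computation parallel to the one in Proposition~\ref{prop:bimod-iota}.
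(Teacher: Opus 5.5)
You follow the paper's own route: canonical \(U_{r,0},U_{0,r}\), reduction to \(\iota_{1,1}\), coisometry from the adjoint formula of Proposition~\ref{prop:bimod-iota} and \(mm^{*}=\delta^{2}\Id_{B}\), and associativity by comparing junction products. The balancing and associativity parts are fine. But two steps fail, and the paper's proof skips the same two points.

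\textbf{First gap: \(S\) does not preserve \(\Ecor[\Qgh]^{r}\).} Since \(\sigma_{i}^{\psi}(b)=Q^{-1}bQ\), only the factor \(Q\) on the right is absorbed by the right action on the last leg. The factor \(Q^{-1}\) multiplies the last leg from the left, and \(\Ecor[\Qgh]^{r}\) is not closed under that. Take the trivial graph on \(B=M_{2}(\C)\) with \(q_{t}\defeq\psi(e_{tt})\) and \(q_{1}\neq q_{2}\). Then \(S(m^{*}(f_{11}))=\sum_{t}\frac{q_{1}}{q_{t}}f_{1t}\otimes f_{t1}\notin m^{*}(B)=\Ecor[\Qgh]\). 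Moreover \(U_{1,1}(m^{*}(f_{11})\otimes m^{*}(f_{11}))=\lambda\sum_{t}q_{t}^{-1}f_{1t}\otimes f_{11}\otimes f_{t1}\). Its coefficients depend on \(t\), so it does not lie in \(\Ecor[\Qgh]^{2}=\mathrm{span}\{\sum_{l}f_{il}\otimes X\otimes f_{lj}\}\) (Section~\ref{sec:examples}). Thus \(U_{1,1}\) does not even take values in \(\Ecor[\Qgh]^{2}\).

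\textbf{Second gap: coisometry is only shown for the ambient map.} Your matrix-unit computation, where the middle leg becomes \(\lambda^{2}\psi(p_{a})mm^{*}\), proves \(\tilde U\tilde U^{*}=\Id\) for \(\tilde U\colon B^{\otimes(r+1)}\otimes_{Z(B)}B^{\otimes(s+1)}\to B^{\otimes(r+s+1)}\). The formula in Proposition~\ref{prop:bimod-iota} is verified on arbitrary elementary tensors, so it is the adjoint of this ambient map. The adjoint of the restriction to \(\Ecor[\Qgh]^{r}\otimes_{Z(B)}\Ecor[\Qgh]^{s}\) is \(P\tilde U^{*}\), where \(P\) is the projection onto that submodule. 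Hence \(U_{r,s}U_{r,s}^{*}=\Id\) requires the invariance \(\tilde U^{*}(\Ecor[\Qgh]^{r+s})\subseteq\Ecor[\Qgh]^{r}\otimes_{Z(B)}\Ecor[\Qgh]^{s}\), which you never address.

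This invariance fails even for traces. For the trivial graph on \((M_{2}(\C),\frac{1}{2}\mathrm{Tr})\) one has \(\sigma^{\psi}=\Id\), \(\lambda=\frac12\), and \(U_{1,1}(m^{*}(e_{ij})\otimes m^{*}(e_{i'j'}))=2\sum_{r}e_{ir}\otimes e_{i'j}\otimes e_{rj'}\). Both families are orthogonal bases, of \(\Ecor[\Qgh]\otimes\Ecor[\Qgh]\) and of \(\Ecor[\Qgh]^{2}\) respectively, with squared norms \(4\) and \(1\). So \(U_{1,1}\) is \(\frac12\) times a unitary, and \(U_{1,1}U_{1,1}^{*}=\frac14\Id\).

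What is missing is therefore not careful tracking of modular factors. Both invariance properties are automatic for classical graphs and complete quantum graphs. In general, though, the statement itself needs an extra hypothesis, or a different definition and normalization of \(U_{r,s}\), before an argument along these lines can succeed.
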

\begin{proof}
 For every~\(r\in\N_{0}\) the maps~\(U_{r,0}\) and \(U_{0,r}\) are the unitaries for the canonical identifications \(\Ecor[\Qgh]^{r}\otimes_{Z(B)} Z(B)\cong\Ecor[\Qgh]^{r}\) and~\(Z(B)\otimes_{Z(B)}\Ecor[\Qgh]^{r}\cong\Ecor[\Qgh]^{r}\), respectively. Observe that~\(\lambda\) is a positive element of~\(Z(B)\). For every positive integers \(r,s\), Proposition~\ref{prop:bimod-iota} shows that \(U_{r,s}\) are adjointable. 

Consequently, for all \(x\in \Ecor[\Qgh]^{r+s}\) we get,
 \begin{align*}
    U_{r,s}U_{r,s}^{*}(x)
   &=\frac{1}{\delta^{2}}\sum_{a=1}^{N} 
   (\Id_{B^{\otimes r}}\otimes p_{a}T(\sigma_{2i}^{\psi}\otimes\Id_{B})T^{*}\otimes \Id_{B^{\otimes s}})
    (1_{B^{\otimes r}} \otimes p_{a}\otimes 1_{B^{\otimes s}})(x)\\
   &=\frac{1}{\delta^{2}}\sum_{a=1}^{N} (1_{B^{\otimes r}} \otimes p_{a}mm^{*}p_{a}\otimes 1_{B^{\otimes s}})(x)=x.
 \end{align*}
 Therefore, \(U_{r,s}\) is coisometry for all~\(r,s\in \N_{0}\). Finally, we need to verify \(U_{r+s,t}\circ (U_{r,s}\otimes\Id_{\Ecor[\Qgh]^{t}})=U_{r,s+t}\circ (\Id_{\Ecor[\Qgh]^{r}}\otimes U_{s,t})\) for every for positive integers~\(r,s,t\). Suppose \(x=\sum_{\alpha}x^{1}_{\alpha}\otimes \cdots \otimes x^{r+1}_{\alpha}\in\Ecor[\Qgh]^{r}\subseteq B^{\otimes (r+1)}\),  \(y=\sum_{\beta}y^{1}_{\beta}\otimes \cdots \otimes y^{s+1}_{\beta} \in\Ecor[\Qgh]^{s}\subseteq B^{\otimes (s+1)}\) and \(z=\sum_{\gamma}z^{1}_{\gamma}\otimes \cdots \otimes z^{n+1}_{\gamma}\in\Ecor[\Qgh]^{t}\subseteq B^{\otimes (t+1)}\). Then the following routine computation completes the proof:
  \begin{align*}
   & U_{r+s,t}\bigl((U_{r,s}\otimes\Id_{\Ecor[\Qgh]^{t}})\bigr)(x\otimes y\otimes z)\\
    &=\frac{1}{\delta^{2}}\sum_{a,b\in S}\sum_{\alpha,\beta,\gamma}x^{1}_{\alpha}\otimes \cdots \otimes p_{a}y^{1}_{\beta}x^{r+1}_{\alpha}\otimes y^{2}_{\beta}\otimes\cdots \otimes p_{b}z^{1}_{\gamma}y_{\beta}^{s+1}\otimes\cdots \otimes z^{k+1}_{\gamma}\\
  &= U_{r,s+t}\bigl((\Id_{\Ecor[\Qgh]^{r}}\otimes U_{s,t})\bigr) (x\otimes y\otimes z). \qedhere
  \end{align*}
\end{proof}

\section{Examples}
 \label{sec:examples}
 \subsection{Classical graph} 
  Suppose \(\Qgh\) is a directed graph with vertex set \(V\) and edge set~\(E\), both finite. The state 
  \(\psi\) on \(\Cont(V)\) is the integration with respect to the uniform probability measure, which is a \(\sqrt{|V|}\)\nb-form, where \(|V|\) denotes the cardinality of \(V\). So, \(T^{*}=\flip\circ m^*\). 
  Recall that the quantum edge indicator and quantum edge correspondence computed in~\cite{BHINW2023a}*{Example 2.2} as
  \[
     \Eind[\Qgh]=\sum_{\substack{v_{0},v_{1}\in V,\\ v_{0}\leftarrow v_{1}}}p_{v_{0}}\otimes p_{v_{1}}, 
     \qquad 
     \Ecor[\Qgh]=\Cont(E).
  \]    
  For every positive integer \(k\), let \(E^{k}\) denotes the set of \(k\)\nb-length paths in \(\mathcal{G}\), that is, 
  \(E^{k}=\{(v_{0},v_{1},\cdots,v_{k})\in V^{k+1}\mid v_{i}\leftarrow v_{i+1}, i=0,\cdots k-1\}\). 
  Firstly, we compute \(2\)\nb-length quantum path indicator
  \begin{align*}
       \Eind[\Qgh]^{2}
     =\left(\Id_{\Cont(V)}\otimes (\Id_{\Cont(V)}\otimes A)T^*\right)\Eind[\Qgh]
     &=\sum_{\substack{v_{0},v_{1},w_{0},v_{2}\in V,\\ v_{0}\leftarrow v_{1}, w_{0}\leftarrow v_{2}}}p_{v_{0}}\otimes m(p_{v_{1}}\otimes p_{w_{0}})\otimes p_{v_{2}}\\
     &=\sum_{\substack{v_{0},v_{1},v_{2}\in V,\\ v_{0}\leftarrow v_{1} \leftarrow v_{2}}} p_{v_{0}}\otimes p_{v_{1}} \otimes p_{v_{2}}.
  \end{align*}   
  Thus \(\Eind[\Qgh]^{2}\in\Cont(V\times V\times V)\) is the indicator function of \(E^{2}\) and \(\Ecor[\Qgh]^{2}=\Cont(E^2)\). For \(k> 2\), by induction we can show that \(\Eind[\Qgh]^{k}\in\Cont(V^{k+1})\) is the indicator function of~\(E^{k}\) and \(\Ecor[\Qgh]^{k}=\Cont(E^{k})\).

\subsection{Complete quantum graph} Let \(\QSp\) be a finite quantum space with faithful \(\delta\)\nb-form \(\psi\). The complete quantum graph structure on~\((B,\psi)\) is given by quantum adjacency matrix \(A(x)=\delta^{2}\psi(x)1_{B}\) for all~\(x\in B\). We denote \(K_{\textup{dim}(B)}=\QghTr\). 

The quantum edge indicator~\(\Eind[K_{\textup{dim}(B)}]=1_{B}\otimes 1_{B}\) and quantum edge correspondence~\(\Ecor[K_{\textup{dim}(B)}]=B\otimes B\). Consequently, \(\Eind[K_{\textup{dim}(B)}]^{2}=(\Id_{B}\otimes T\otimes\Id_{B})(\Eind[K_{\textup{dim}(B)}]\otimes\Eind[K_{\textup{dim}(B)}])=1_{B}\otimes 1_{B}\otimes 1_{B}\) and \(\Ecor[K_{\textup{dim}(B)}]^{2}=B^{\otimes 3}\). Similarly, for every positive integer \(k\), \(\Eind[K_{\textup{dim}(B)}]^{k}=1_{B^{\otimes (k+1)}}\) and \(\Ecor[K_{\textup{dim}(B)}]^{k}=B^{\otimes (k+1)}\).

\subsection{Trivial quantum graph} Let \(\QSp\) be a finite quantum space with faithful \(\delta\)\nb-form \(\psi\). The trivial quantum graph structure on~\((B,\psi)\) is given by quantum adjacency matrix \(A=\Id_{B}\). We denote \(T_{\textup{dim}(B)}=\QghTr\). 

Let \(B=\oplus_{a=1}^{N}M_{n_{a}}(\C)\).  Using~\eqref{eq:adapted-mat-unit} and~\eqref{eq:adapted-mat-rel} we get the quantum edge indicator 
\[
   \Eind[T_{\textup{dim}(B)}]=\frac{1}{\delta^{2}}\sum_{a=1}^{N}\sum_{k,l=1}^{n_{a}}\psi(e_{kk}^{a})f_{kl}^{a}\otimes f_{lk}^{a}
\]   
and quantum edge correspondence 
\[
  \Ecor[T_{\textup{dim}(B)}]=\textup{span}\left\{\sum_{l=1}^{n_{a}}f_{rl}^{a}\otimes f_{ls}^{a}
  \mid 1\leq r,s\leq n_{a},\text{ } 1\leq a\leq N\right\}. 
\]
\cite{BHINW2023a}*{Proposition 4.4} shows that \(\frac{1}{\delta}m^{*}\colon B\to \Ecor[T_{\textup{dim}(B)}]\) is an isomorphism of \(\Cst\)\nb-correspondences. 

Let~\(k\) be a positive integer. Consider the external tensor product \(L^{2}(B,\psi)^{\otimes (k-1)}\otimes (B\otimes B)\) of the Hilbert
space~\(L^{2}(B,\psi)^{\otimes (k-1)}\) with the \(\Cst\)\nb-correspondence \(B\otimes B\) over \(B\), which is again a
\(\Cst\)\nb-correspondence over \(B\). The map \(f_{k}\colon L^{2}(B,\psi)^{\otimes (k-1)}\otimes (B \otimes B)\to B^{\otimes (k+1)}\), defined by \(f_{k}(x\otimes a\otimes b)=a\otimes x\otimes b\) for all~\(a,b\in B\) and \(x\in B^{\otimes (k-1)}\cong L^{2}(B,\psi)^{\otimes (k-1)}\), extends to an isomorphism of \(\Cst\)\nb-correspondences. 

\begin{proposition}
 For every positive integer \(k\) consider the \(\Cst\)\nb-subcorrespondence \( \oplus_{a=1}^{N}(L^{2}(M_{n_{a}}(\C),\psi_{a})^{\otimes (k-1)}\otimes M_{n_{a}}(\C))\subseteq L^{2}(B,\psi)^{\otimes (k-1)}\otimes B\) over \(B\). Then \(\frac{1}{\delta}f_{k}\circ (\Id_{B^{\otimes (k-1)}}\otimes m^{*})\colon \oplus_{a=1}^{N}(L^{2}(M_{n_{a}}(\C),\psi_{a})^{\otimes (k-1)}\otimes M_{n_{a}}(\C))\to\Ecor[T_{\textup{dim}(B)}]^{k}\) is an isomorphism of \(\Cst\)\nb-correspondences.
\end{proposition}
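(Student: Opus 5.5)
The plan is to write \(\Phi_{k}\defeq\frac{1}{\delta}f_{k}\circ(\Id_{B^{\otimes(k-1)}}\otimes m^{*})\) and check three things in turn: \(\Phi_{k}\) is a \(B\)\nb-bimodule map, it preserves the \(B\)\nb-valued inner products, and its range on \(\oplus_{a=1}^{N}(L^{2}(M_{n_{a}}(\C),\psi_{a})^{\otimes(k-1)}\otimes M_{n_{a}}(\C))\) is exactly \(\Ecor[T_{\textup{dim}(B)}]^{k}\). Injectivity then follows from isometry, and an isometric bimodule map onto the target is an isomorphism of \(\Cst\)\nb-correspondences.

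\emph{Bimodule property.} The left and right \(B\)\nb-actions on the external tensor product act only on the \(B\)\nb-factor. The map \(m^{*}\) is a \(B\)\nb-bimodule map \(B\to B\otimes B\), that is, \(m^{*}(bxc)=(b\otimes 1)m^{*}(x)(1\otimes c)\). The map \(f_{k}\) moves the first leg of \(m^{*}(x)\) to the front and leaves the last leg at the end. Hence left multiplication lands on the first tensor factor of \(B^{\otimes(k+1)}\) and right multiplication on the last, as required.

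\emph{Inner products.} For \(x,y\in B^{\otimes(k-1)}\) and \(b,c\in M_{n_{a}}(\C)\), the inner product \(\langle\Phi_{k}(x\otimes b),\Phi_{k}(y\otimes c)\rangle_{B}\) factors as \(\langle x,y\rangle_{\psi^{\otimes(k-1)}}\) times \(\frac{1}{\delta^{2}}(\psi\otimes\Id_{B})(m^{*}(b)^{*}m^{*}(c))\). This is because \(\psi^{\otimes k}\otimes\Id_{B}\) applies \(\psi\) to the first leg and \(\psi^{\otimes(k-1)}\) to the middle legs. The second factor equals \(b^{*}c\) by the computation behind \cite{BHINW2023a}*{Proposition 4.4}, which says \(\frac{1}{\delta}m^{*}\colon B\to\Ecor[T_{\textup{dim}(B)}]\) is unitary. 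One can also check it directly on adapted matrix units using \eqref{eq:adapted-mat-rel} and \(\sum_{k}\psi(e^{a}_{kk})^{-1}=\delta^{2}\).

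\emph{Range.} This is the main obstacle. First I compute \(\Eind[T_{\textup{dim}(B)}]^{k}\) explicitly by induction from \eqref{eq:k-path-ind} with \(A=\Id_{B}\). For this I use
\[
T^{*}(f^{a}_{lk})=\sum_{t}\frac{\psi(e^{a}_{tt})}{\psi(e^{a}_{kk})}f^{a}_{tk}\otimes f^{a}_{lt}.
\]
I expect to get
\[
\Eind[T_{\textup{dim}(B)}]^{k}=\frac{1}{\delta^{2k}}\sum_{a}\sum_{i,l,t_{1},\dots,t_{k-1}}c\, f^{a}_{il}\otimes f^{a}_{t_{1}\ast}\otimes\cdots\otimes f^{a}_{\ast i},
\]
with positive scalar weights \(c\), where every leg lies in the same block \(M_{n_{a}}(\C)\). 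The outer pair assembles to a multiple of \(m^{*}\)-type sums \(\sum_{l}f^{a}_{\cdot l}\otimes f^{a}_{l\cdot}\). This shows \(\Eind^{k}\) lies in \(\Phi_{k}\) of the block-diagonal subcorrespondence.

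Next, multiplying by \(x\otimes 1_{B}\) on the left and \(1_{B}\otimes y\) on the right preserves the blocks, because products of adapted matrix units from different blocks vanish. So \(\Ecor^{k}\) is contained in the image. For the reverse inclusion, I choose \(x\) and \(y\) as matrix units to isolate single terms. The left multiplication on the middle legs by \(f^{a}_{pq}\)'s, together with the right multiplication, should produce every \(f^{a}_{pq}\) in each middle slot. The outer legs then produce every \(\sum_{l}f^{a}_{rl}\otimes f^{a}_{ls}=m^{*}(f^{a}_{rs})\). Alternatively, I can use \eqref{eq:edge-corr-stitch}: \(\Ecor^{k}=(\Id_{B}\otimes T^{\otimes(k-1)}\otimes\Id_{B})(\Ecor^{\otimes k})\) with \(\Ecor=m^{*}(B)\). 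I would then show that \(T(m^{*}(b)\otimes m^{*}(c))\)-type contractions give precisely \(f_{k}\circ(\id\otimes m^{*})\) of block-diagonal tensors, arguing by induction on \(k\). I would try this inductive route first, since \(T\circ(f^{a}_{\cdot l}\otimes f^{b}_{r\cdot})\) is nonzero only for \(a=b\), which directly explains the block-diagonal restriction.
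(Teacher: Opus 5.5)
Your proposal is correct and follows essentially the paper's route. Like the paper, you compute the path indicator explicitly, note that all legs lie in a single block with the outer pair of legs forming an \(m^{*}\)-type sum, and read off \(\Ecor[T_{\textup{dim}(B)}]^{k}\) as the span of the elements \(\sum_{l}f^{a}_{il}\otimes(\cdots)\otimes f^{a}_{lj}\); the paper does this only for \(k=2\), via Theorem~\ref{the:prop-p-ind-p-mod}(1), and leaves the rest to induction. Your bimodule and isometry checks make explicit what the paper leaves implicit in its remarks on \(f_{k}\) and \(\frac{1}{\delta}m^{*}\).

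One small correction to your guessed index pattern. The recursion actually gives
\(\Eind[T_{\textup{dim}(B)}]^{k}=\frac{1}{\delta^{2k}}\sum_{a}\sum_{l,t_{1},\dots,t_{k}}\psi(e^{a}_{t_{k}t_{k}})\,f^{a}_{t_{1}l}\otimes f^{a}_{t_{2}t_{1}}\otimes\cdots\otimes f^{a}_{t_{k}t_{k-1}}\otimes f^{a}_{lt_{k}}\).
So for \(k\geq 2\) the first leg's row index is tied to the second leg's column index, not to the last leg's column index. This does not affect your range argument.
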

\begin{proof}
 Using Theorem~\ref{the:prop-p-ind-p-mod}(1), we compute 
 \begin{align*}
  \Eind[T_{\textup{dim}(B)}]^{2}
  &=(\Id_{B}\otimes T\otimes\Id_{B})
  (\Eind[T_{\textup{dim}(B)}]\otimes\Eind[T_{\textup{dim}(B)}])\\
  &=\frac{1}{\delta^{4}}\sum_{a,b=1}^{N}\sum_{k,l=1}^{n_{a}}\sum_{r,s=1}^{n_{b}}\psi(e_{kk}^{a})\psi(e_{rr}^{b}) 
  f_{kl}^{a}\otimes T(f_{lk}^{a}\otimes f_{rs}^{b})\otimes f_{sr}^{b}\\
  &=\frac{1}{\delta^{4}}\sum_{a,b=1}^{N}\sum_{k,l=1}^{n_{a}}\sum_{r,s=1}^{n_{b}}\psi(e_{kk}^{a})\psi(e_{rr}^{b}) 
  f_{kl}^{a}\otimes f_{rs}^{b}\sigma_{-i}^{\psi}(f_{lk}^{a})\otimes f_{sr}^{b}\\
&=\frac{1}{\delta^{4}}\sum_{a,b=1}^{N}\sum_{k,l=1}^{n_{a}}\sum_{r,s=1}^{n_{b}}\psi(e_{rr}^{b})\psi(e_{ll}^{a}) 
 f_{kl}^{a}\otimes f_{rs}^{b}f_{lk}^{a}\otimes f_{sr}^{b}\\
&=\frac{1}{\delta^{4}}\sum_{a=1}^{N}\sum_{k,l,r=1}^{n_{a}}\psi(e_{rr}^{a})
  f_{kl}^{a}\otimes f_{rk}^{a}\otimes f_{lr}^{a}. 
 \end{align*}
 Subsequently, 
 \[
   \Ecor[T_{\textup{dim}(B)}]^{2}
   =\textup{span}\left\{\sum_{l=1}^{n_{a}}
  f_{il}^{a}\otimes f_{rs}^{a}\otimes f_{lj}^{a}
  \mid 1\leq i,j,r,s\leq n_{a},\text{ } 1\leq a\leq N\right\}
 \]
 which is the image~\(\frac{1}{\delta}f_{2}\circ(\Id_{B}\otimes m^*)(\oplus_{a=1}^{N}(L^{2}(M_{n_{a}}(\C),\psi_{a})\otimes M_{n_{a}}(\C)))\). The proof for \(k> 2\) follows by induction.
\end{proof}

\end{document}